\newcommand{\area}{\mathsf{area}}
\newcommand{\dinv}{\mathsf{dinv}}
\newcommand{\shift}{\mathsf{shift}}
\newcommand{\D}{\mathsf{D}} 
\newcommand{\SQ}{\mathsf{SQ}} 
\newcommand{\LD}{\mathsf{LD}} 
\newcommand{\LSQ}{\mathsf{LSQ}} 
\DeclareFontFamily{U}{bigshuffle}{}
\DeclareFontShape{U}{bigshuffle}{m}{n}{
	<5-8> s*[1.7] shuffle7
	<8->  s*[1.7] shuffle10
}{}
\DeclareSymbolFont{BigShuffle}{U}{bigshuffle}{m}{n}
\DeclareMathSymbol\bigshuffle{\mathop}{BigShuffle}{"001}
\DeclareMathSymbol\bigcshuffle{\mathop}{BigShuffle}{"002}
\newcommand{\Ht}{\widetilde{H}}
\newcommand{\N}{\mathbb{N}}
\newcommand{\qbinom}[2]{\genfrac{[}{]}{0pt}{}{#1}{#2}}
\newcommand{\<}{\langle}
\renewcommand{\>}{\rangle}
\theoremstyle{plain}
\theoremstyle{definition}
\newtheorem{theorem}{Theorem}[section]
\newtheorem{conjecture}[theorem]{Conjecture}
\newtheorem{definition}[theorem]{Definition}
\newtheorem{lemma}[theorem]{Lemma}
\newtheorem{proposition}[theorem]{Proposition}
\theoremstyle{remark}
\newtheorem{remark}[theorem]{Remark}
\renewenvironment{proof}[1][\proofname]{%
	\par\pushQED{\qed}\normalfont%
	\topsep6\p@\@plus6\p@\relax
	\trivlist\item[\hskip\labelsep\bfseries#1\@addpunct{.}]%
	\ignorespaces
}{%
	\qedhere 
}
\DeclareRobustCommand*{\bfseries}{%
	\not@math@alphabet\bfseries\mathbf
	\fontseries\bfdefault\selectfont
	\boldmath
}
\title{Some consequences of the valley Delta conjectures}
\author{Michele D'Adderio}
\address
{Dipartimento di Matematica \newline \indent
Universit\`a di Pisa \newline \indent
Pisa, PI, 56127, Italia}
\email{michele.dadderio@unipi.it}
\author{Alessandro Iraci}
\address
{Laboratoire d'alg\`ebre, de combinatoire, et d'informatique math\'ematique \newline \indent
Universit\'e du Qu\'ebec \`a Montr\'eal \newline \indent
Montr\'eal, QC, H2X 3Y7, Canada}
\email{iraci.alessandro@uqam.ca}
\begin{document}

\begin{abstract}
In \cite{Haglund-Remmel-Wilson-2018} Haglund, Remmel and Wilson introduced their \emph{Delta conjectures}, which give two different combinatorial interpretations of the symmetric function $\Delta'_{e_{n-k-1}} e_n$ in terms of rise-decorated or valley-decorated labelled Dyck paths respectively. While the rise version has been recently proved \cites{DAdderio-Mellit-Compositional-Delta-2020,Blasiak-Haiman-Morse-Pun-Seeling-Extended-Delta-2021}, not much is known about the valley version. In this work we prove the Schr\"oder case of the valley Delta conjecture, the Schr\"oder case of its square version \cite{Iraci-VandenWyngaerd-Valley-Square-2021}, and the Catalan case of its extended version \cite{Qiu-Wilson-2020}. Furthermore, assuming the symmetry of (a refinement of) the combinatorial side of the extended valley Delta conjecture, we deduce also the Catalan case of its square version \cite{Iraci-VandenWyngaerd-Valley-Square-2021}.
\end{abstract}

\maketitle

\section{Introduction}

In \cite{Haglund-Remmel-Wilson-2018} the authors introduced their \emph{Delta conjectures}, which give two different combinatorial interpretations of the symmetric function $\Delta'_{e_{n-k-1}} e_n$ in terms of rise-decorated or valley-decorated labelled Dyck paths respectively. More precisely,
\begin{equation} \label{eq:DeltaConjs}
	\Delta'_{e_{n-k-1}} e_n=\sum_{\pi\in \LD(0,n)^{* k}}q^{\dinv(\pi)}t^{\area(\pi)}=\sum_{\pi\in \LD(0,n)^{\bullet k}}q^{\dinv(\pi)}t^{\area(\pi)}
\end{equation}
(see Sections~\ref{sec:CombDef}~and~\ref{sec:SF} for the missing definitions).

This symmetric function is of particular interest as it gives conjecturally the Frobenius characteristic of the so called \emph{super diagonal coinvariants} \cite{Zabrocki-Delta-Module-2019}.

The rise version has been extensively studied \cites{DAdderio-Iraci-VandenWyngaerd-Delta-t0-2020, DAdderio-Iraci-VandenWyngaerd-TheBible-2019, DAdderio-Iraci-VandenWyngaerd-GenDeltaSchroeder-2019, Garsia-Haglund-Remmel-Yoo-2019, Romero-Deltaq1-2017} before being finally proved: in \cite{DAdderio-Mellit-Compositional-Delta-2020} it is proved the compositional refinement introduced in \cite{DAdderio-Iraci-VandenWyngaerd-Theta-2021}, and in \cite{Blasiak-Haiman-Morse-Pun-Seeling-Extended-Delta-2021} it is proved the extended version. On the other hand, the valley version received significantly less attention, this fact being mainly due to technical difficulties. Before mentioning what is known about the valley Delta to this day, we want to remark the intrinsic interest in this version of the conjecture: indeed in \cite{Haglund-Sergel-2021} a conjectural basis of the super diagonal coinvariants in \cite{Zabrocki-Delta-Module-2019} is provided, that would explain the Hilbert series predicted by the valley Delta conjecture (not the rise Delta!).

To this date, extended \cite{Qiu-Wilson-2020} and square versions \cite{Iraci-VandenWyngaerd-Valley-Square-2021} of the valley Delta have been formulated. In \cites{Iraci-VandenWyngaerd-Valley-Square-2021,Iraci-VandenWyngaerd-pushing-2021} it is proved that the original valley Delta conjecture implies these other versions, a surprising fact that has no analogue for the rise Delta conjectures: indeed, the rise version of the square Delta conjecture \cite{DAdderio-Iraci-VandenWyngaerd-DeltaSquare-2019} is still open. 

On the valley Delta conjecture itself, almost nothing is known: for example it is not even clear that the combinatorial side (the rightmost sum in \eqref{eq:DeltaConjs}) is a symmetric function.


In this work we make a first step in this direction, by proving the so called Schr\"oder case of the valley versions of the Delta conjecture, i.e.\ the scalar product $\<-,e_{n-d}h_d\>$. The strategy, similar to the one used in \cite{DAdderio-Iraci-VandenWyngaerd-TheBible-2019}, allows also to prove the Schr\"oder case of the valley Delta square conjecture in \cite{Iraci-VandenWyngaerd-Valley-Square-2021}. Using results from \cite{Iraci-VandenWyngaerd-pushing-2021} we are able also to deduce the Catalan case (i.e.\ the scalar product  $\<-,e_{n}\>$) of the extended valley Delta conjecture. Finally, assuming the symmetry of (a refinement of) the combinatorial side of the extended valley Delta conjecture, we deduce also the Catalan case of its square version \cite{Iraci-VandenWyngaerd-Valley-Square-2021}.

\smallskip

The paper is organized in the following way: in Section~2 and Section~3 we introduce the combinatorial objects and the symmetric function tools respectively, needed in the rest of the paper. In Section~4 we prove our main results, providing in particular a recursion leading to the proof of the Schr\"oder cases of the valley Delta and the valley Delta square conjectures. In Sections~5 we prove how the Schr\"oder case of the valley Delta conjecture implies the Catalan case of the extended valley Delta conjecture, and in Section~6, assuming the symmetry of (a refinement of) the combinatorial side of the extended valley Delta conjecture, we deduce also the Catalan case of its square version.

\section{Combinatorial definitions}\label{sec:CombDef}

We recall the relevant combinatorial definitions (see also \cites{Haglund-Remmel-Wilson-2018, Qiu-Wilson-2020, Iraci-VandenWyngaerd-Valley-Square-2021}).

\begin{definition}
	A \emph{square path} of size $n$ is a lattice path going from $(0,0)$ to $(n,n)$ consisting of east or north unit steps, always ending with an east step. The set of such paths is denoted by $\SQ(n)$. The \emph{shift} of a square path is the maximum value $s$ such that  the path intersect the line $y=x-s$ in at least one point. We refer to the line $y=x+i$ as \emph{$i$-th diagonal} of the path, to the line $x=y$ (the $0$-th diagonal) as the \emph{main diagonal}, and to the line $y=x-s$ where $s$ is the shift of the square path as the \emph{base diagonal}. A vertical step whose starting point lies on the $i$-th diagonal is said to be at \emph{height} $i$. A \emph{Dyck path} is a square path whose shift is $0$. The set of Dyck paths is denoted by $\D(n)$. Of course $\D(n) \subseteq \SQ(n)$.  
\end{definition}

For example, the path (ignoring the circled numbers) in Figure~\ref{fig:labelled-square-path} has shift $3$. 

\begin{figure}[!ht]
	\centering
	\begin{tikzpicture}[scale = 0.6]
		\draw[step=1.0, gray!60, thin] (0,0) grid (8,8);
		\draw[gray!60, thin] (3,0) -- (8,5);
		
		\draw[blue!60, line width=1.6pt] (0,0) -- (0,1) -- (1,1) -- (2,1) -- (3,1) -- (4,1) -- (4,2) -- (5,2) -- (5,3) -- (5,4) -- (6,4) -- (6,5) -- (6,6) -- (6,7) -- (7,7) -- (7,8) -- (8,8);
		
		\node at (0.5,0.5) {$2$};
		\draw (0.5,0.5) circle (.4cm); 
		\node at (4.5,1.5) {$1$};
		\draw (4.5,1.5) circle (.4cm); 
		\node at (5.5,2.5) {$2$};
		\draw (5.5,2.5) circle (.4cm); 
		\node at (5.5,3.5) {$4$};
		\draw (5.5,3.5) circle (.4cm); 
		\node at (6.5,4.5) {$1$};
		\draw (6.5,4.5) circle (.4cm); 
		\node at (6.5,5.5) {$3$};
		\draw (6.5,5.5) circle (.4cm); 
		\node at (6.5,6.5) {$4$};
		\draw (6.5,6.5) circle (.4cm); 
		\node at (7.5,7.5) {$1$};
		\draw (7.5,7.5) circle (.4cm);
	\end{tikzpicture}
	\caption{Example of an element in $\LSQ(0,8)$.}
	\label{fig:labelled-square-path}
\end{figure}
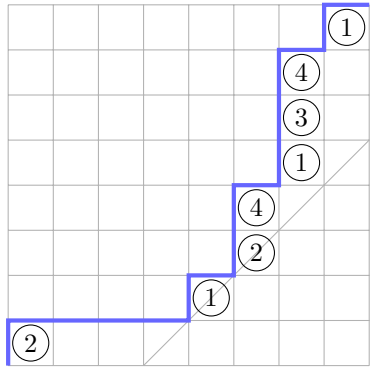

\begin{definition}
	Let $\pi$ be a square path of size $n$. We define its \emph{area word} to be the sequence of integers $a(\pi) = (a_1(\pi), a_2(\pi), \cdots, a_n(\pi))$ such that the $i$-th vertical step of the path starts from the diagonal $y=x+a_i(\pi)$. For example the path in Figure~\ref{fig:labelled-square-path} has area word $(0, \, -\!3, \, -\!3, \, -\!2, \, -\!2, \, -\!1, \, 0, \, 0)$.
\end{definition}

\begin{definition}
	A \emph{partial labelling} of a square path $\pi$ of size $n$ is an element $w \in \mathbb N^n$ such that
	\begin{itemize}
		\item if $a_i(\pi) > a_{i-1}(\pi)$, then $w_i > w_{i-1}$,
		\item $a_1(\pi) = 0 \implies w_1 > 0$,
		\item there exists an index $i$ such that $a_i(\pi) = - \shift(\pi)$ and $w_i(\pi) > 0$,
	\end{itemize}
	i.e.\ if we label the $i$-th vertical step of $\pi$ with $w_i$, then the labels appearing in each column of $\pi$ are strictly increasing from bottom to top, with the additional restrictions that, if the path starts north then the first label cannot be a $0$, and that there is at least one positive label lying on the base diagonal.
	
	We omit the word \emph{partial} if the labelling consists of strictly positive labels only.
\end{definition}

\begin{definition}
	A \emph{(partially) labelled square path} (resp.\ \emph{Dyck path}) is a pair $(\pi, w)$ where $\pi$ is a square path (resp. Dyck path) and $w$ is a (partial) labelling of $\pi$. We denote by $\LSQ(m,n)$ (resp. $\LD(m,n)$) the set of labelled square paths (resp. Dyck paths) of size $m+n$ with exactly $n$ positive labels, and thus exactly $m$ labels equal to $0$. See Figure~\ref{fig:labelled-square-path} for an example.
\end{definition}

The following definitions will be useful later on.

\begin{definition}
	Let $w$ be a labelling of square path of size $n$. We define 
	\[  x^w \coloneqq \left. \prod_{i=1}^{n} x_{w_i} \right\rvert_{x_0 = 1}.\]
\end{definition}

The fact that we set $x_0 = 1$ explains the use of the expression \emph{partially labelled}, as the labels equal to $0$ do not contribute to the monomial.

Sometimes, with an abuse of notation, we will write $\pi$ as a shorthand for a labelled path $(\pi, w)$. In that case, we use the notation 
\[x^\pi \coloneqq x^w.\]

Now we want to extend our sets introducing some decorations.

\begin{definition}
	\label{def:valley}
	The \emph{contractible valleys} of a labelled square path $\pi$ of size $n$ are the indices $1 \leq i \leq n$ such that one of the following holds:
	\begin{itemize}
		\item $i = 1$ and either $a_1(\pi) < -1$, or $a_1(\pi) = -1$ and $w_1 > 0$,
		\item $i > 1$ and $a_i(\pi) < a_{i-1}(\pi)$,
		\item $i > 1$, $a_i(\pi) = a_{i-1}(\pi)$, and $w_i > w_{i-1}$.
	\end{itemize}
	
	We define \[ v(\pi, w) \coloneqq \{1 \leq i \leq n \mid i \text{ is a contractible valley} \}, \] corresponding to the set of vertical steps that are directly preceded by a horizontal step and, if we were to remove that horizontal step and move it after the vertical step, we would still get a square path with a valid labelling. In particular, if the vertical step is in the first row and it is attached to a $0$ label, then we require that it is preceded by at least two horizontal steps (as otherwise by removing it we get a path starting north with a $0$ label in the first row).
\end{definition}


\begin{definition}
	\label{def:rise}
	The \emph{rises} of a (labelled) square path $\pi$ of size $n$ are the indices in \[ r(\pi) \coloneqq \{2 \leq i \leq n \mid a_i(\pi) > a_{i-1}(\pi)\}, \] i.e.\ the vertical steps that are directly preceded by another vertical step. 
\end{definition}

\begin{definition}
	A \emph{valley-decorated (partially) labelled square path} is a triple $(\pi, w, dv)$ where $(\pi, w)$ is a (partially) labelled square path and $dv \subseteq v(\pi, w)$. A \emph{rise-decorated (partially) labelled square path} is a triple $(\pi, w, dr)$ where $(\pi, w)$ is a (partially) labelled square path and $dr \subseteq r(\pi)$.
\end{definition}

Again, we will often write $\pi$ as a shorthand for the corresponding triple $(\pi, w, dv)$ or $(\pi, w, dr)$.
%

We denote by $\LSQ(m,n)^{\bullet k}$ (resp. $\LSQ(m,n)^{\ast k}$) the set of partially labelled valley-decorated (resp. rise-decorated) square paths of size $m+n$ with $n$ positive labels and $k$ decorated contractible valleys (resp. decorated rises). We denote by $\LD(m,n)^{\bullet k}$ (resp. $\LD(m,n)^{\ast k}$) the corresponding subset of Dyck paths. 

We also define $\LSQ'(m,n)^{\bullet k}$ as the set of paths in $\LSQ(m,n)^{\bullet k}$ such that there exists an index $i$ such that $a_i(\pi) = - \shift(\pi)$ with $i \not \in dv$ and $w_i(\pi) > 0$, i.e.\ there is at least one positive label lying on the base diagonal that is not a decorated valley. See Figure~\ref{fig:decorated-square-paths} for examples.

Notice that, because of the restrictions we have on the labelling and the decorations, the only path with $n=0$ is the empty path, for which $m=0$ and $k=0$.

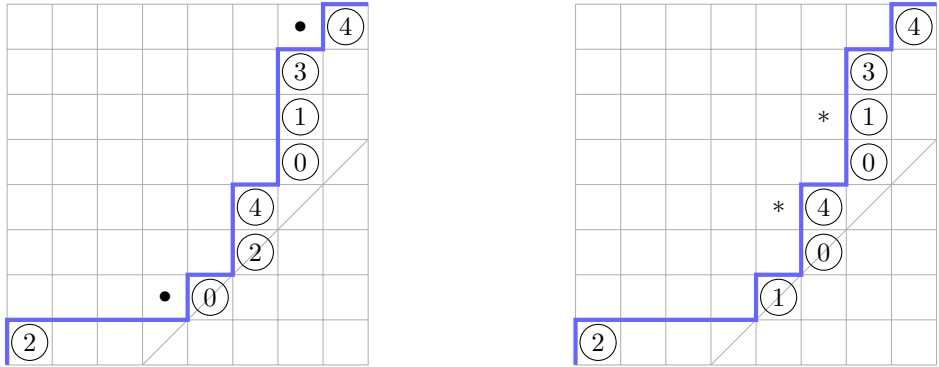
\begin{figure}[!ht]
    \centering
	\begin{minipage}{0.5\textwidth}
		\centering
		\begin{tikzpicture}[scale = 0.6]
		\draw[step=1.0, gray!60, thin] (0,0) grid (8,8);
		\draw[gray!60, thin] (3,0) -- (8,5);
		
		\draw[blue!60, line width=1.6pt] (0,0) -- (0,1) -- (1,1) -- (2,1) -- (3,1) -- (4,1) -- (4,2) -- (5,2) -- (5,3) -- (5,4) -- (6,4) -- (6,5) -- (6,6) -- (6,7) -- (7,7) -- (7,8) -- (8,8);
		
		\node at (3.5,1.5) {$\bullet$};
		\node at (6.5,7.5) {$\bullet$};
		
		\node at (0.5,0.5) {$2$};
		\draw (0.5,0.5) circle (.4cm); 
		\node at (4.5,1.5) {$0$};
		\draw (4.5,1.5) circle (.4cm); 
		\node at (5.5,2.5) {$2$};
		\draw (5.5,2.5) circle (.4cm); 
		\node at (5.5,3.5) {$4$};
		\draw (5.5,3.5) circle (.4cm); 
		\node at (6.5,4.5) {$0$};
		\draw (6.5,4.5) circle (.4cm); 
		\node at (6.5,5.5) {$1$};
		\draw (6.5,5.5) circle (.4cm); 
		\node at (6.5,6.5) {$3$};
		\draw (6.5,6.5) circle (.4cm); 
		\node at (7.5,7.5) {$4$};
		\draw (7.5,7.5) circle (.4cm);
		\end{tikzpicture}
	\end{minipage}%
	\begin{minipage}{0.5\textwidth}
		\centering
		\begin{tikzpicture}[scale = 0.6]
		\draw[step=1.0, gray!60, thin] (0,0) grid (8,8);
		\draw[gray!60, thin] (3,0) -- (8,5);
		
		\draw[blue!60, line width=1.6pt] (0,0) -- (0,1) -- (1,1) -- (2,1) -- (3,1) -- (4,1) -- (4,2) -- (5,2) -- (5,3) -- (5,4) -- (6,4) -- (6,5) -- (6,6) -- (6,7) -- (7,7) -- (7,8) -- (8,8);
		
		\node at (4.5,3.5) {$\ast$};			
		\node at (5.5,5.5) {$\ast$};
		
		\node at (0.5,0.5) {$2$};
		\draw (0.5,0.5) circle (.4cm); 
		\node at (4.5,1.5) {$1$};
		\draw (4.5,1.5) circle (.4cm); 
		\node at (5.5,2.5) {$0$};
		\draw (5.5,2.5) circle (.4cm); 
		\node at (5.5,3.5) {$4$};
		\draw (5.5,3.5) circle (.4cm); 
		\node at (6.5,4.5) {$0$};
		\draw (6.5,4.5) circle (.4cm); 
		\node at (6.5,5.5) {$1$};
		\draw (6.5,5.5) circle (.4cm); 
		\node at (6.5,6.5) {$3$};
		\draw (6.5,6.5) circle (.4cm); 
		\node at (7.5,7.5) {$4$};
		\draw (7.5,7.5) circle (.4cm);
		\end{tikzpicture}
	\end{minipage}
	\caption{Examples of an element in $\LSQ'(2,6)^{\bullet 2}$ (left) and an element in $\LSQ(2,6)^{\ast 2}$ (right).}
	\label{fig:decorated-square-paths}
\end{figure}

We also recall the two relevant statistics on these sets (see \cite{Iraci-VandenWyngaerd-Valley-Square-2021}) that reduce to the ones defined in \cite{Loehr-Warrington-square-2007} when $m=0$ and $k=0$.

\begin{definition}
	\label{def:area}
	Let $(\pi, w, dr) \in \LSQ(m,n)^{\ast k}$ and $s$ be its shift. We define 
	\[ \area(\pi, w, dr) \coloneqq \sum_{i \not \in dr} (a_i(\pi) + s), \]
	i.e.\ the number of whole squares between the path and the base diagonal that are not in rows containing a decorated rise.
	
	For $(\pi, w, dv) \in \LSQ(m,n)^{\bullet k}$ we set $\area(\pi, w, dv) \coloneqq \area(\pi, w, \varnothing)$, where $(\pi, w, \varnothing) \in \LSQ(m,n)^{\ast 0}$.
\end{definition}

For example, the paths in Figure~\ref{fig:decorated-square-paths} have area $13$ (left) and $10$ (right). Notice that the area does not depend on the labelling.

\begin{definition}
	\label{def:dinv}
	Let $(\pi, w, dv) \in \LSQ(m,n)^{\bullet k}$. For $1 \leq i < j \leq n$, the pair $(i,j)$ is a \emph{diagonal inversion} if
	\begin{itemize}
		\item either $a_i(\pi) = a_j(\pi)$ and $w_i < w_j$ (\emph{primary inversion}),
		\item or $a_i(\pi) = a_j(\pi) + 1$ and $w_i > w_j$ (\emph{secondary inversion}),
	\end{itemize}
	where $w_i$ denotes the $i$-th letter of $w$, i.e.\ the label of the vertical step in the $i$-th row. Then we define 
	\begin{align*}
	\dinv(\pi,w,dv) & \coloneqq \# \{ 1 \leq i < j \leq n \mid (i,j) \text{ is an inversion  and } i \not \in dv \}\\
	& + \#\{1 \leq i \leq n \mid a_i(\pi) < 0 \text{ and } w_i > 0 \} - \# dv.
	\end{align*}
	
	For $(\pi, w, dr) \in \LSQ(m,n)^{\ast k}$ we set $\dinv(\pi, w, dr) \coloneqq \dinv(\pi, w, \varnothing)$, where $(\pi, w, \varnothing) \in \LSQ(m,n)^{\bullet 0}$.
\end{definition}

We refer to the middle term, counting the non-zero labels below the main diagonal, as \emph{bonus} or \emph{tertiary dinv}.

For example, the path in Figure~\ref{fig:decorated-square-paths} (left) has dinv equal to $4$: $2$ primary inversions in which the leftmost label is not a decorated valley, i.e.\ $(1,7)$ and $(1,8)$; $1$ secondary inversion in which the leftmost label is not a decorated valley, i.e.\ $(1,6)$; $3$ bonus dinv, coming from the rows $3$, $4$, and $6$; $2$ decorated contractible valleys.

It is easy to check that if $j \in dv$ then either there exists some diagonal inversion $(i,j)$ or $a_j(\pi) < 0$, and so the dinv is always non-negative (see \cite{Iraci-VandenWyngaerd-Valley-Square-2021}*{Proposition~1}).

\smallskip

\emph{To lighten the notation we will usually refer to a labelled path $(\pi, w, dr)$ or $(\pi, w, dv)$ simply by $\pi$, so for example we will write $\pi \in \LSQ(m,n)^{\ast k}$ and $\dinv(\pi)$.}

\smallskip

Let $\pi$ be any labelled path defined above, with shift $s$. We define its \emph{reading word} as the sequence of labels read starting from the ones in the base diagonal ($y=x-s$) going bottom to top; next the ones in the diagonal $y=x-s+1$ bottom to top; then the ones in the diagonal $y=x-s+2$ and so on. For example, the path in Figure~\ref{fig:decorated-square-paths} (left) has reading word $02401234$.

Let us consider the paths in $\LSQ(m,n)^{\bullet k}$ where the reading word is a shuffle of $m$ $0$'s, the string $1, 2,\cdots, n-d$, and the string $n,n-1, \cdots n-d+1$. Notice that, given this restriction and the information about the position of the zero labels, all the information we need to keep track of the labelling is the position of the $d$ biggest labels, which will end up labelling \emph{peaks}, i.e.\ vertical steps followed by a horizontal step. Hence we can denote the $d$ biggest labels by decorated peaks and forget about the positive labels. We can thus identify our set with the set $\SQ(m,n)^{\bullet k, \circ d}$ of square paths with $m$ vertical steps labelled by a zero, $n$ non-labelled vertical steps, $k$ decorated contractible valleys and $d$ decorated peaks, where a valley is contractible if it is contractible in the corresponding labelled path, and the statistics are inherited from the labelled path as well. Similarly, we define $\SQ'(m,n)^{\bullet k, \circ d} \subseteq \SQ(m,n)^{\bullet k, \circ d}$ to be the subset coming from $\LSQ'(m,n)^{\bullet k}\subseteq \LSQ(m,n)^{\bullet k}$, $\SQ(m,n)^{\ast k, \circ d}$ the one coming from rise-decorated paths, and $\D(m,n)^{\bullet k, \circ d}$ and $\D(m,n)^{\ast k, \circ d}$ for the Dyck counterparts. 

Finally, we sometimes omit writing $m$ or $k$ when they are equal to $0$, e.g.
\[\SQ'(n)^{\bullet k, \circ d}=\SQ'(0,n)^{\bullet k, \circ d}\quad\text{ and }\quad \LSQ(n) =\LSQ(0,n)^{\ast 0}. \] 

\section{Symmetric functions}\label{sec:SF}

For all the undefined notations and the unproven identities, we refer to \cite{DAdderio-Iraci-VandenWyngaerd-TheBible-2019}*{Section~1}, where definitions, proofs and/or references can be found. 

We denote by $\Lambda$ the graded algebra of symmetric functions with coefficients in $\mathbb{Q}(q,t)$, and by $\<\, , \>$ the \emph{Hall scalar product} on $\Lambda$, defined by declaring that the Schur functions form an orthonormal basis.

The standard bases of the symmetric functions that will appear in our calculations are the monomial $\{m_\lambda\}_{\lambda}$, complete $\{h_{\lambda}\}_{\lambda}$, elementary $\{e_{\lambda}\}_{\lambda}$, power $\{p_{\lambda}\}_{\lambda}$ and Schur $\{s_{\lambda}\}_{\lambda}$ bases.

For a partition $\mu \vdash n$, we denote by \[ \Ht_\mu \coloneqq \Ht_\mu[X] = \Ht_\mu[X; q,t] = \sum_{\lambda \vdash n} \widetilde{K}_{\lambda, \mu}(q,t) s_{\lambda} \] the \emph{(modified) Macdonald polynomials}, where \[ \widetilde{K}_{\lambda, \mu}(q,t) \coloneqq K_{\lambda, \mu}(q,1/t) t^{n(\mu)} \] are the \emph{(modified) $q,t$-Kostka coefficients} (see \cite{Haglund-Book-2008}*{Chapter~2} for more details). 

Macdonald polynomials form a basis of the ring of symmetric functions $\Lambda$. This is a modification of the basis introduced by Macdonald \cite{Macdonald-Book-1995}.

If we identify the partition $\mu$ with its Ferrer diagram, i.e.\ with the collection of cells $\{(i,j)\mid 1\leq i\leq \mu_j, 1\leq j\leq \ell(\mu)\}$, then for each cell $c\in \mu$ we refer to the \emph{arm}, \emph{leg}, \emph{co-arm} and \emph{co-leg} (denoted respectively as $a_\mu(c), l_\mu(c), a'_\mu(c), l'_\mu(c)$) as the number of cells in $\mu$ that are strictly to the right, above, to the left and below $c$ in $\mu$, respectively (see Figure~\ref{fig:limbs}). 

\begin{figure}
	\centering
	\begin{tikzpicture}[scale=.52]
		\draw[gray] (0,0) grid (9,1);
		\draw[gray] (0,1) grid (8,2);
		\draw[gray] (0,2) grid (7,3);
		\draw[gray] (0,3) grid (7,4);
		\draw[gray] (0,4) grid (4,5);
		\draw[gray] (0,5) grid (4,6);
		\draw[gray] (0,6) grid (3,7);
		\draw[gray] (0,7) grid (2,8);
		\node at (2.5,3.5) {$c$};
		\fill[blue, opacity=.4] (0,3) rectangle (2,4) node[midway, opacity=1, black]{co-arm};
		\fill[blue, opacity=.4] (3,3) rectangle (7,4) node[midway, opacity=1, black]{arm};
		\fill[blue, opacity=.4] (2,4) rectangle (3,7) node[midway, opacity=1, black, rotate=90]{leg};
		\fill[blue, opacity=.4] (2,3) rectangle (3,0) node[midway, opacity=1, black, rotate=90]{co-leg};
	\end{tikzpicture}
	\caption{Limbs and co-limbs of a cell in a partition.}
	\label{fig:limbs}
\end{figure}
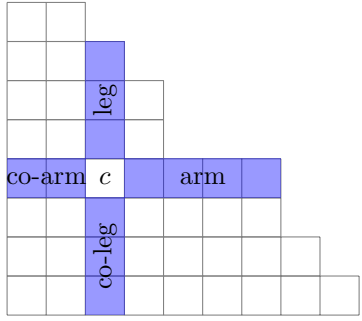

Let $M \coloneqq (1-q)(1-t)$. For every partition $\mu$, we define the following constants:

\begin{align*}
	B_{\mu} & \coloneqq B_{\mu}(q,t) = \sum_{c \in \mu} q^{a_{\mu}'(c)} t^{l_{\mu}'(c)},\\
	\Pi_{\mu} &  \coloneqq \Pi_{\mu}(q,t) = \prod_{c \in \mu / (1)} (1-q^{a_{\mu}'(c)} t^{l_{\mu}'(c)}),\\
	w_{\mu} &\coloneqq w_{\mu}(q,t)=\prod_{c\in \mu}(q^{a_{\mu}(c)}-t^{l_{\mu}(c)+1})(t^{l_{\mu}(c)}-q^{a_{\mu}(c)+1}). 
	\end{align*}

We will make extensive use of the \emph{plethystic notation} (cf. \cite{Haglund-Book-2008}*{Chapter~1 page 19}). We will use also the standard shorthand $f^\ast = f \left[\frac{X}{M}\right]$.

We define the \emph{star scalar product} by setting for every $f,g\in \Lambda$
\[ \<f,g\>_\ast\coloneqq \<f[X],\omega g[MX]\>, \]
where $\omega$ is the involution of $\Lambda$ sending $e_\lambda$ to $h_\lambda$ for every partition $\lambda$.

It is well known that for any two partitions $\mu,\nu$ we have 
\[ \<\Ht_\mu,\Ht_\nu\>_\ast=\delta_{\mu,\nu}w_\mu. \]

We also need several linear operators on $\Lambda$.

\begin{definition}[\protect{\cite{Bergeron-Garsia-ScienceFiction-1999}*{3.11}}]
	\label{def:nabla}
	We define the linear operator $\nabla \colon \Lambda \rightarrow \Lambda$ on the eigenbasis of Macdonald polynomials as \[ \nabla \Ht_\mu = e_{\lvert \mu \rvert}[B_\mu] \Ht_\mu=q^{n(\mu')}t^{n(\mu)} \Ht_\mu. \]
\end{definition}

\begin{definition}
	\label{def:pi}
	We define the linear operator $\mathbf{\Pi} \colon \Lambda \rightarrow \Lambda$ on the eigenbasis of Macdonald polynomials as \[ \mathbf{\Pi} \Ht_\mu = \Pi_\mu \Ht_\mu \] where we conventionally set $\Pi_{\varnothing} \coloneqq 1$.
\end{definition}

\begin{definition}
	\label{def:delta}
	For $f \in \Lambda$, we define the linear operators $\Delta_f, \Delta'_f \colon \Lambda \rightarrow \Lambda$ on the eigenbasis of Macdonald polynomials as \[ \Delta_f \Ht_\mu = f[B_\mu] \Ht_\mu, \qquad \qquad \Delta'_f \Ht_\mu = f[B_\mu-1] \Ht_\mu. \]
\end{definition}

Observe that on the vector space of symmetric functions homogeneous of degree $n$, denoted by $\Lambda^{(n)}$, the operator $\nabla$ equals $\Delta_{e_n}$. Notice also that $\nabla$, $\Delta_f$ and $\mathbf{\Pi}$ are all self-adjoint with respect to the star scalar product.

\begin{definition}[\protect{\cite{DAdderio-Iraci-VandenWyngaerd-Theta-2021}*{(28)}}]
	\label{def:theta}
	 For any symmetric function $f \in \Lambda^{(n)}$ we define the \emph{Theta operators} on $\Lambda$ in the following way: for every $F \in \Lambda^{(m)}$ we set
	\begin{equation*}
		\Theta_f F  \coloneqq 
		\left\{\begin{array}{ll}
			0 & \text{if } n \geq 1 \text{ and } m=0 \\
			f \cdot F & \text{if } n=0 \text{ and } m=0 \\
			\mathbf{\Pi} (f \left[\frac{X}{M}\right] \cdot \mathbf{\Pi}^{-1} F) & \text{otherwise}
		\end{array}
		\right. ,
	\end{equation*}
and we extend by linearity the definition to any $f, F \in \Lambda$.
\end{definition}

It is clear that $\Theta_f$ is linear, and moreover, if $f$ is homogeneous of degree $k$, then so is $\Theta_f$, i.e. \[\Theta_f \Lambda^{(n)} \subseteq \Lambda^{(n+k)} \qquad \text{ for } f \in \Lambda^{(k)}. \]

It is convenient to introduce the so called $q$-notation. In general, a $q$-analogue of an expression is a generalisation involving a parameter $q$ that reduces to the original one for $q \rightarrow 1$.

\begin{definition}
	For a natural number $n \in \mathbb{N}$, we define its $q$-analogue as \[ [n]_q \coloneqq \frac{1-q^n}{1-q} = 1 + q + q^2 + \dots + q^{n-1}. \]
\end{definition}

Given this definition, one can define the $q$-factorial and the $q$-binomial as follows.

\begin{definition}
	We define \[ [n]_q! \coloneqq \prod_{k=1}^{n} [k]_q \quad \text{and} \quad \qbinom{n}{k}_q \coloneqq \frac{[n]_q!}{[k]_q![n-k]_q!} \]
\end{definition}

\begin{definition}
	For $x$ any variable and $n \in \N \cup \{ \infty \}$, we define the \emph{$q$-Pochhammer symbol} as \[ (x;q)_n \coloneqq \prod_{k=0}^{n-1} (1-xq^k) = (1-x) (1-xq) (1-xq^2) \cdots (1-xq^{n-1}). \]
\end{definition}

We can now introduce yet another family of symmetric functions.

\begin{definition}
	\label{def:Enk}
	For $0 \leq k \leq n$, we define the symmetric function $E_{n,k}$ \cite{Garsia-Haglund-qtCatalan-2002} by the expansion \[ e_n \left[ X \frac{1-z}{1-q} \right] = \sum_{k=0}^n \frac{(z;q)_k}{(q;q)_k} E_{n,k}. \]
\end{definition}

Notice that $E_{n,0} = \delta_{n,0}$. Setting $z=q^j$ we get \[ e_n \left[ X \frac{1-q^j}{1-q} \right] = \sum_{k=0}^n \frac{(q^j;q)_k}{(q;q)_k} E_{n,k} = \sum_{k=0}^n \qbinom{k+j-1}{k}_q E_{n,k} \] and in particular, for $j=1$, we get 
\begin{equation}
	\label{eq:Enken} e_n = E_{n,0} + E_{n,1} + E_{n,2} + \cdots + E_{n,n}, 
\end{equation} 
so these symmetric functions split $e_n$, in some sense.

The Theta operators will be useful to restate the Delta conjectures in a new fashion, thanks to the following results.

\begin{theorem}[\protect{\cite{DAdderio-Iraci-VandenWyngaerd-Theta-2021}*{Theorem~3.1} }]
	\label{thm:theta-en}
	\[ \Theta_{e_k} \nabla e_{n-k} = \Delta'_{e_{n-k-1}} e_n \]
\end{theorem}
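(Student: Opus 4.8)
The plan is to verify the identity coefficientwise in the modified Macdonald basis. Since $\langle \Ht_\mu, \Ht_\nu\rangle_{\ast} = \delta_{\mu\nu}w_\mu$, two homogeneous symmetric functions of degree $n$ coincide if and only if they pair identically against every $\Ht_\mu$ with $\mu\vdash n$, so it suffices to show $\langle \Theta_{e_k}\nabla e_{n-k}, \Ht_\mu\rangle_{\ast} = \langle \Delta'_{e_{n-k-1}} e_n, \Ht_\mu\rangle_{\ast}$ for all $\mu\vdash n$. For the right-hand side I would use that $\Delta'_{e_{n-k-1}}$ is diagonal on $\{\Ht_\mu\}$, hence $\ast$-self-adjoint (exactly as for $\Delta_f$), together with the standard expansion $\langle e_n,\Ht_\mu\rangle_{\ast}=MB_\mu\Pi_\mu$ (equivalently $e_n=\sum_{\mu\vdash n}\tfrac{MB_\mu\Pi_\mu}{w_\mu}\Ht_\mu$). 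This gives at once
\[
\langle \Delta'_{e_{n-k-1}}e_n,\Ht_\mu\rangle_{\ast}=e_{n-k-1}[B_\mu-1]\,M B_\mu\Pi_\mu.
\]

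For the left-hand side I would first unfold $\Theta_{e_k}F=\mathbf{\Pi}\big(e_k[X/M]\cdot\mathbf{\Pi}^{-1}F\big)$ and use the $\ast$-self-adjointness of $\mathbf{\Pi}$ (with $\mathbf{\Pi}\Ht_\mu=\Pi_\mu\Ht_\mu$) to pull out a factor $\Pi_\mu$:
\[
\langle \Theta_{e_k}\nabla e_{n-k},\Ht_\mu\rangle_{\ast}=\Pi_\mu\,\big\langle e_k[X/M]\cdot\mathbf{\Pi}^{-1}\nabla e_{n-k},\,\Ht_\mu\big\rangle_{\ast}.
\]
The key mechanism is an adjunction for the star product: multiplication by $g[X/M]$ is $\ast$-adjoint to the skewing operator $(\omega g)^\perp$. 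Both $g\mapsto(\text{mult.\ by }g[X/M])$ and $g\mapsto(\omega g)^\perp$ are algebra homomorphisms, so it is enough to verify this on the generators $p_r$; there the computation $p_r^\perp(B[MX])=(1-q^r)(1-t^r)(p_r^\perp B)[MX]$ together with $\omega p_r^\perp=(-1)^{r-1}p_r^\perp\omega$ yields $\langle p_r[X/M]A,B\rangle_{\ast}=(-1)^{r-1}\langle A,p_r^\perp B\rangle_{\ast}=\langle A,(\omega p_r)^\perp B\rangle_{\ast}$. Taking $g=e_k$ (so $\omega e_k=h_k$) reduces the left-hand side to $\Pi_\mu\langle \mathbf{\Pi}^{-1}\nabla e_{n-k},\,h_k^\perp\Ht_\mu\rangle_{\ast}$.

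Now I would expand $\mathbf{\Pi}^{-1}\nabla e_{n-k}=\sum_{\beta\vdash n-k}\tfrac{MB_\beta\,T_\beta}{w_\beta}\Ht_\beta$, where $T_\beta=q^{n(\beta')}t^{n(\beta)}$ is the $\nabla$-eigenvalue, and write the dual Pieri expansion $h_k^\perp\Ht_\mu=\sum_{\beta\vdash n-k}c_{\mu\beta}\Ht_\beta$. Comparing the two sides and cancelling the common factor $\Pi_\mu$, the whole statement collapses to the single scalar identity
\[
M\sum_{\beta\vdash n-k}B_\beta\,T_\beta\,c_{\mu\beta}=M B_\mu\,e_{n-k-1}[B_\mu-1]\qquad(\mu\vdash n).
\]
For $k=0$ this is immediate: $c_{\mu\mu}=1$ and $T_\mu=e_{n-1}[B_\mu-1]$, the latter because $e_{n-1}[B_\mu-1]$ is the product of the $n-1$ monomials $q^{a'_\mu(c)}t^{l'_\mu(c)}$ over $c\neq(0,0)$, whose exponents sum to $n(\mu')$ and $n(\mu)$; this is exactly the classical $\nabla e_n=\Delta'_{e_{n-1}}e_n$, and it is the prototype for the general mechanism.

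The main obstacle is the general $k$ case of this last identity. Proving it requires inserting the explicit modified-Macdonald dual Pieri coefficients $c_{\mu\beta}$ (the $q,t$-deformation of the classical rule, as in Macdonald's Chapter~VI or Garsia--Haglund--Tesler) and then carrying out a nontrivial $q,t$-summation over the partitions $\beta\vdash n-k$ obtained from $\mu$ by removing the appropriate $k$-strip, which must collapse the weighted sum $\sum_\beta B_\beta T_\beta c_{\mu\beta}$ to $B_\mu e_{n-k-1}[B_\mu-1]$. An alternative that avoids the Pieri bookkeeping is to package all $k$ into one generating function via $\Ht_\mu[X+z]=\sum_{j}z^j h_j^\perp\Ht_\mu$, reducing the whole family of identities to the single evaluation of $\langle \sum_\alpha\tfrac{MB_\alpha T_\alpha}{w_\alpha}\Ht_\alpha,\ \Ht_\mu[X+z]\rangle_{\ast}$; I expect this to be the cleanest route, with the difficulty concentrated precisely in finding a closed form for that pairing.
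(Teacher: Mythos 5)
Your reductions are all sound: pairing against $\Ht_\mu$ under $\<\,,\>_\ast$, the $\ast$-self-adjointness of the diagonal operators $\Delta'_{e_{n-k-1}}$ and $\mathbf{\Pi}$, the expansion $\<e_n,\Ht_\mu\>_\ast = MB_\mu\Pi_\mu$, and the adjunction between multiplication by $e_k\left[\frac{X}{M}\right]$ and $h_k^\perp$ are all standard and correctly applied (modulo the trivial edge case $n=k$, where the ``otherwise'' branch of the definition of $\Theta_{e_k}$ does not apply). But the proof stops exactly where the theorem actually lives. The entire content of the statement has been repackaged into the identity
\[
\sum_{\beta\vdash n-k} B_\beta\, q^{n(\beta')}t^{n(\beta)}\, c_{\mu\beta} \;=\; B_\mu\, e_{n-k-1}[B_\mu-1]\qquad(\mu\vdash n),
\]
where $c_{\mu\beta}$ are the dual Pieri coefficients for $h_k^\perp\Ht_\mu$, and for this you offer only the $k=0$ check and the assertion that the sum ``must collapse,'' together with a hope that a generating-function reformulation via $\Ht_\mu[X+z]$ would be cleanest. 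That is not a proof; the $q,t$-Pieri coefficients for modified Macdonald polynomials are rational functions with nontrivial cancellation, and summations of the form $\sum_\beta c_{\mu\beta}\,(\text{weight})_\beta$ are precisely the hard technical lemmas in this subject. Nothing in your argument rules out the identity failing for some $\mu$ and $k\geq 1$; you have verified it only in the one case where it degenerates to $\nabla e_n = \Delta'_{e_{n-1}}e_n$.

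For comparison: the present paper does not prove this statement at all, but imports it from \cite{DAdderio-Iraci-VandenWyngaerd-Theta-2021}*{Theorem~3.1}. The proof given there runs along essentially the skeleton you describe, but the crucial step is supplied by an explicit summation formula for Macdonald Pieri coefficients (going back to identities of Macdonald and of Garsia--Haglund--Tesler, reproved in \cite{DAdderio-Iraci-VandenWyngaerd-TheBible-2019}*{Section~1}), which evaluates exactly the kind of weighted sum you left open. So your proposal correctly identifies the architecture of the argument and the precise point of difficulty, but it does not close the gap: as written it is an announcement of a strategy, not a proof.
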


We will also need the following identity.

\begin{theorem}[\cite{DAdderio-Romero-Theta-Identities-2020}*{Corollary~9.2}]
	\label{thm:sf-identity}
	Given $m,n,k,r \in \N$, we have
	\begin{align*}
		h_m^\perp \Theta_{e_k} \nabla E_{n-k,r} = \sum_{p=0}^m t^{m-p} \sum_{i=0}^p q^{\binom{i}{2}} \qbinom{r-p+i}{i}_q \qbinom{r}{p-i}_q \Delta_{h_{m-p}} \Theta_{e_{k-i}} \nabla E_{n-m-(k-i), r-p+i}
	\end{align*}
where $h_m^\perp$ is the adjoint of the multiplication by $h_m$ with respect to the Hall scalar product.
\end{theorem}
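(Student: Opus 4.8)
The identity is quoted above as Corollary~9.2 of \cite{DAdderio-Romero-Theta-Identities-2020}; here is the route I would take to reprove it from the toolkit assembled in Section~\ref{sec:SF}. The organizing device is the single-variable plethystic translation
\[ F[X+z] = \sum_{m \geq 0} z^m\,(h_m^\perp F)[X], \]
valid because the only Schur functions that survive a one-variable specialization are the one-row ones $s_{(m)}=h_m$. Thus it suffices to compute $(\Theta_{e_k}\nabla E_{n-k,r})[X+z]$ once: the coefficient of $z^m$ is exactly the left-hand side, and reading off all coefficients simultaneously produces the whole family of identities in $m$ at one stroke, with the outer sum over $p$ and the inner sum over $i$ appearing as the way the degree-$m$ part distributes among the three factors $E$, $\nabla$, and $\Theta_{e_k}$.

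First I would record the three atomic ``addition formulas''. From the defining generating function of Definition~\ref{def:Enk} one extracts how $E_{n,r}[X+z]$ decomposes; I expect this to contribute the two independent reductions of the pair $(n,r)$, namely a drop of the first index (from $h_1^\perp e_n = e_{n-1}$ and $h_j^\perp e_n = 0$ for $j \geq 2$) and a drop $r \mapsto r-(p-i)$ of the second, the latter carrying the factor $\qbinom{r}{p-i}_q$. The behaviour of $\nabla$ under $X \mapsto X+z$ is governed by the Macdonald eigenoperator identities, and is responsible for the prefactor $t^{m-p}$ together with the operators $\Delta_{h_{m-p}}$. The most delicate ingredient is the passage of the translation through $\Theta_{e_k}$: using the factorization $\Theta_{e_k}F = \mathbf{\Pi}(e_k^\ast \cdot \mathbf{\Pi}^{-1}F)$ from Definition~\ref{def:theta} together with the Leibniz rule $h_j^\perp(fg) = \sum_{a+b=j}(h_a^\perp f)(h_b^\perp g)$ for the skewing of a product, I expect the drop $\Theta_{e_k} \mapsto \Theta_{e_{k-i}}$ to emerge, with the $\mathbf{\Pi}$-conjugation producing the tell-tale $q^{\binom{i}{2}}$ and the binomial $\qbinom{r-p+i}{i}_q$.

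As a concrete anchor and sanity check I would first verify the single-step instance $m=1$ directly, namely
\[ h_1^\perp \Theta_{e_k}\nabla E_{n-k,r} = t\,\Delta_{h_1}\Theta_{e_k}\nabla E_{n-1-k,r} + [r]_q\,\Theta_{e_k}\nabla E_{n-1-k,r-1} + [r]_q\,\Theta_{e_{k-1}}\nabla E_{n-k,r}, \]
which already exhibits all three reduction types, and then assemble the general case from the shift computation. The main obstacle is twofold. First, $\mathbf{\Pi}$ does not commute with the skewing operators, so pushing $h_j^\perp$ through $\Theta_{e_k}$ is not mechanical and must be carried out inside the $\mathbf{\Pi}$-conjugated product with care. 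Second, once the three addition formulas are combined, one is left with a multiple sum indexed by how the degree $m$ was split, and collapsing it into the stated closed form with coefficients $q^{\binom{i}{2}}\qbinom{r-p+i}{i}_q\qbinom{r}{p-i}_q$ requires a $q$-Chu--Vandermonde summation together with the Pochhammer/$q$-binomial identities recorded after Definition~\ref{def:Enk}. A secondary bookkeeping point is to confirm that the index shifts $k \mapsto k-i$, $n \mapsto n-m$, $r \mapsto r-p+i$ keep all degrees and the base-diagonal/positivity constraints consistent on both sides.
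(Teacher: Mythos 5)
The paper offers no proof of this statement: it is imported verbatim (after the change of variables the authors record immediately below it) from D'Adderio--Romero, Corollary~9.2, so there is no in-paper argument to compare yours against. Judged on its own terms, your sketch has the right skeleton --- the generating-function identity $F[X+z]=\sum_{m\ge 0}z^m\,(h_m^\perp F)[X]$ is correct, and your $m=1$ instance agrees exactly with the $m=1$ specialization of the stated formula (the three terms are the contributions of $(p,i)=(0,0),(1,0),(1,1)$) --- but as written it is a plan rather than a proof, because the steps that carry essentially all of the content are asserted rather than established.

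Concretely, each of your three ``atomic addition formulas'' is a genuine theorem, and none is supplied. (1) There is no elementary rule for commuting $X\mapsto X+z$ past $\nabla$: the claim that this step ``is responsible for'' the factor $t^{m-p}$ and the operators $\Delta_{h_{m-p}}$ is a restatement of $h_j^\perp\nabla E_{n,k}$-type identities in the tradition of Garsia--Haglund, which are comparable in depth to the target statement itself. (2) The passage through $\Theta_{e_k}=\mathbf{\Pi}\circ(e_k^\ast\,\cdot)\circ\mathbf{\Pi}^{-1}$ is the crux: the Leibniz rule $h_j^\perp(fg)=\sum_{a+b=j}(h_a^\perp f)(h_b^\perp g)$ handles the multiplication-by-$e_k^\ast$ factor, but $\mathbf{\Pi}$ and $\mathbf{\Pi}^{-1}$ are not multiplication operators and do not commute with skewing; you flag this obstruction but offer no mechanism to resolve it, and overcoming it is precisely what the ``new identities for Theta operators'' of the cited paper were built for. (3) Even the decomposition of $E_{n-k,r}[X+z]$ is not immediate, since Definition~\ref{def:Enk} determines the $E_{n,k}$ only implicitly through an expansion in the auxiliary variable, and the translation does not act diagonally on that expansion. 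Finally, beware that iterating your $m=1$ identity does not yield the general case, since $(h_1^\perp)^m\neq m!\,h_m^\perp$; the full $X+z$ expansion is needed, as you say, but then the collapse of the resulting multiple sum to the stated coefficients is an additional $q$-Chu--Vandermonde computation that would have to be carried out explicitly. If the goal is to use this identity (as the paper does), citing it is the appropriate move; if the goal is to reprove it, each of the three ingredients above needs its own proof.
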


We applied the change of variables $j \mapsto m, m \mapsto k, p \mapsto n-k-r, k \mapsto r, s \mapsto p, r \mapsto p-i$ in \cite{DAdderio-Romero-Theta-Identities-2020}*{Corollary~9.2} in order to make it easier to interpret combinatorially and more consistent with the notation used in \cite{Iraci-VandenWyngaerd-pushing-2021}.

\section{The Schr\"oder case of the valley version of the Delta conjectures}

First of all, we state the extended valley Delta conjecture and the extended valley Delta square conjecture.
\begin{conjecture}[Extended valley Delta conjecture \cite{Qiu-Wilson-2020}]
	\label{conj:valleyDelta}
	\begin{equation} \label{eq:valleyDelta}
		\Delta_{h_m}\Theta_{e_k}\nabla e_{n-k} =\sum_{\pi\in \LD(m,n)^{\bullet k}}q^{\dinv(\pi)}t^{\area(\pi)}x^{\pi}.
	\end{equation}
\end{conjecture}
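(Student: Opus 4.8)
The plan is to prove a one-parameter refinement of \eqref{eq:valleyDelta} and then recover the stated identity by summation. On the symmetric-function side I would replace $\nabla e_{n-k}$ by $\nabla E_{n-k,r}$; on the combinatorial side I would restrict the sum to the subset $\LD(m,n)^{\bullet k}_{r} \subseteq \LD(m,n)^{\bullet k}$ of paths having exactly $r$ vertical steps on the main diagonal that carry a positive label and are not decorated valleys (the refinement underlying the set $\LSQ'$). The goal then becomes the family of identities
\[
\Delta_{h_m}\Theta_{e_k}\nabla E_{n-k,r} \;=\; \sum_{\pi\in \LD(m,n)^{\bullet k}_{r}} q^{\dinv(\pi)}t^{\area(\pi)}x^{\pi} \qquad (m,n,k,r\in\N).
\]
Granting these, \eqref{eq:Enken} gives $e_{n-k}=\sum_{r}E_{n-k,r}$, so applying the linear operator $\Delta_{h_m}\Theta_{e_k}\nabla$ and summing over $r$ reassembles exactly \eqref{eq:valleyDelta}; as a bonus, since the left-hand side is manifestly symmetric in $X$, this would simultaneously settle the (currently open) symmetry of the combinatorial side. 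The family of refined identities would be proved by induction on the number $n$ of positive labels, the empty path furnishing the base case $n=0$.

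The engine of the induction is Theorem~\ref{thm:sf-identity}, which I read as a size-reducing recursion: it expresses $h_m^\perp\Theta_{e_k}\nabla E_{n-k,r}$ as an explicit $t$- and $q$-weighted combination of the extended objects $\Delta_{h_{m-p}}\Theta_{e_{k-i}}\nabla E_{n-m-(k-i),\,r-p+i}$, whose sizes $n-m-(k-i)$ are strictly smaller. The first task is to match its left-hand side with a combinatorial operation: the operator $h_m^\perp$ removes a horizontal $m$-strip in the Schur expansion, which on fully positively labelled paths is realized by selecting and deleting $m$ of the reading-word labels and replacing the remaining structure by the partial (zero-labelled) data recorded on the right. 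Under the inductive hypothesis this identifies $h_m^\perp\Theta_{e_k}\nabla E_{n-k,r}$ with a generating function over refined partially labelled paths, so the problem reduces to reproducing the right-hand side of Theorem~\ref{thm:sf-identity} by a purely combinatorial decomposition.

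The heart of the argument is this decomposition, obtained by peeling the bottom row of the path together with the horizontal steps preceding its first vertical step. Removing these steps should account for the factor $t^{m-p}$ through the area lost below the main diagonal, with $p$ governing how much of the removed material sat on or above the diagonal. The subtlety is that a removed horizontal step may convert an adjacent decorated contractible valley into a non-valley, and up to $i$ decorated valleys may have to be deleted outright; each such deletion both shifts $k\mapsto k-i$ (matching the $\Theta_{e_{k-i}}$ on the right) and alters the diagonal-inversion count. I would show that the reorganisation of the $r$ base-diagonal labels, the insertion of the $p-i$ new zeros, and the destruction of the $i$ valleys contribute exactly $q^{\binom{i}{2}}\qbinom{r-p+i}{i}_q\qbinom{r}{p-i}_q$ by enumerating the admissible label rearrangements weighted by the change in $\dinv$, and that $r-p+i$ is precisely the new count of base-diagonal positive non-decorated labels. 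Matching term by term with Theorem~\ref{thm:sf-identity} and invoking the inductive hypothesis on each summand would close the induction; the $m=0$, fully labelled case is the foundation from which the remaining $\Delta_{h_{m-p}}$ terms are bootstrapped.

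The main obstacle is exactly this last combinatorial bookkeeping for the valley statistics. Unlike the rise case, the valley $\dinv$ carries both the tertiary (bonus) term counting positive labels below the diagonal and the correction $-\#dv$, and these interact nontrivially with the deletion of decorated valleys: when $i$ valleys are absorbed one must simultaneously track the primary and secondary inversions they created (subject to the $i\notin dv$ restriction), the bonus dinv they carried, and the $-\#dv$ penalty, and then prove that the net effect collapses to the single uniform weight $q^{\binom{i}{2}}\qbinom{r-p+i}{i}_q\qbinom{r}{p-i}_q$ for every value of $i$ at once. Producing a weight-preserving decomposition that realises these $q$-binomial factors simultaneously for all $i$ \emph{at the level of the monomials $x^\pi$} — rather than only after the label data is collapsed by the scalar products $\<-,e_{n-d}h_d\>$ or $\<-,e_n\>$, where the present recursive techniques already succeed — is the crux on which the full conjecture turns.
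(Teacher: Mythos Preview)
The statement you are addressing is \emph{Conjecture}~\ref{conj:valleyDelta}, and it remains a conjecture in the paper: there is no proof of it to compare your proposal against. The paper establishes only the Schr\"oder case $\<-,e_{n-d}h_d\>$ for $m=0$ (Theorem~\ref{thm:valley-schroeder}) and the Catalan case $\<-,e_n\>$ for general $m$ (the final theorem of Section~5), and it explicitly flags even the symmetry of the right-hand side of~\eqref{eq:valleyDelta} as open (Remark~\ref{rmk:SFcombside}). Your write-up is therefore not a proof but a strategy sketch for an open problem, and you yourself say as much in the final paragraph.

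Two concrete gaps deserve to be named. First, your induction rests on what you call ``the $m=0$, fully labelled case'' as a foundation, but that case is precisely the (unrefined) valley Delta conjecture, which is open; Theorem~\ref{thm:sf-identity} lets one pass from the $m=0$ identity to the $\Delta_{h_m}$ identity, but it does not supply the $m=0$ identity itself. In the paper this circularity is broken only after pairing with $e_{n-d}h_d$ or $e_n$, where the $m=0$ input is Theorem~\ref{thm:valley-schroeder}, already proved by an independent recursion. Second, the combinatorial decomposition you describe---peeling a bottom strip and absorbing $i$ decorated valleys so as to produce the weight $q^{\binom{i}{2}}\qbinom{r-p+i}{i}_q\qbinom{r}{p-i}_q$---is exactly the step the paper carries out only at the level of \emph{unlabelled} paths via the pushing algorithm (Theorem~\ref{thm:comb-identity}); lifting it to arbitrary labellings while controlling the tertiary dinv and the $-\#dv$ correction is, as you correctly diagnose, the unresolved crux. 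So your proposal accurately locates the obstruction, but it does not remove it, and no proof of Conjecture~\ref{conj:valleyDelta} is obtained.
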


\begin{conjecture}[Extended valley Delta square conjecture \cite{Iraci-VandenWyngaerd-Valley-Square-2021}]
	\begin{equation} \label{eq:valleyDeltasquare}
		\Delta_{h_m}\Theta_{e_k}\nabla \omega (p_{n-k})=\sum_{\pi\in \LSQ'(m,n)^{\bullet k}}q^{\dinv(\pi)}t^{\area(\pi)}x^{\pi}.	
	\end{equation}
\end{conjecture}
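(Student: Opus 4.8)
The plan is to derive the square identity \eqref{eq:valleyDeltasquare} from the Dyck identity \eqref{eq:valleyDelta}, and then to prove \eqref{eq:valleyDelta} by an induction driven by Theorem~\ref{thm:sf-identity}. For the reduction I would rely on the implications established in \cite{Iraci-VandenWyngaerd-Valley-Square-2021,Iraci-VandenWyngaerd-pushing-2021}: sorting the paths of $\LSQ'(m,n)^{\bullet k}$ by their shift $s$ and ``pushing'' the part lying below the main diagonal back above it produces labelled \emph{Dyck} paths, while on the symmetric-function side the same sorting expresses $\Theta_{e_k}\nabla\omega(p_{n-k})$ as a $q$-weighted combination of the terms $\Theta_{e_k}\nabla E_{n-k,r}$ indexed by the shift. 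The defining restriction of $\LSQ'(m,n)^{\bullet k}$, namely that some positive non-decorated label sits on the base diagonal, is exactly what renders this pushing map reversible and keeps track of the shift; granting it, \eqref{eq:valleyDeltasquare} reduces to \eqref{eq:valleyDelta}.

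For \eqref{eq:valleyDelta} I would first use \eqref{eq:Enken} to write $e_{n-k}=\sum_{r=0}^{n-k}E_{n-k,r}$, so that by linearity it suffices to establish the refined identities
\[ \Delta_{h_m}\Theta_{e_k}\nabla E_{n-k,r}=\sum_{\pi}q^{\dinv(\pi)}t^{\area(\pi)}x^{\pi}, \]
where $\pi$ runs over the paths of $\LD(m,n)^{\bullet k}$ having exactly $r$ vertical steps starting on the main diagonal. This is the refinement compatible with the shift decomposition used in the reduction above, and it is the natural level at which Theorem~\ref{thm:sf-identity} operates.

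Following the strategy of \cite{DAdderio-Iraci-VandenWyngaerd-TheBible-2019}, I would prove these refined identities by induction on the size, applying the skewing operators $h_j^\perp$ to both sides. On the algebraic side Theorem~\ref{thm:sf-identity} rewrites $h_j^\perp\Theta_{e_k}\nabla E_{n-k,r}$ as an explicit sum of strictly smaller instances $\Delta_{h_{j-p}}\Theta_{e_{k-i}}\nabla E_{\,\bullet,\,r-p+i}$ weighted by $q^{\binom{i}{2}}\qbinom{r-p+i}{i}_q\qbinom{r}{p-i}_q$. On the combinatorial side I would realize $h_j^\perp$ as a step-removal operation on $\LD(m,n)^{\bullet k}$ that deletes $j$ chosen labels together with the steps carrying them, and show that the induced transformation of $\dinv$ and $\area$ reproduces precisely these $q,t$-weights, the $\qbinom{\cdot}{\cdot}_q$ factors accounting for the reinsertion of primary and secondary inversions and the $t$-powers for the loss of area. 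Iterating $h^\perp$ down to the empty path (the unique object with $n=0$) determines every monomial coefficient, and hence the full symmetric function rather than a single scalar product.

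The step I expect to be the genuine obstacle is this last combinatorial matching, and the symmetry it presupposes. Unlike the rise statistic, the valley $\dinv$ is non-local: deleting a step can change which of the remaining valleys are contractible, and the correction term $-\#dv$ couples the decorations to the global diagonal structure, so it is far from clear that the step-removal operation transforms the generating function with exactly the weights of Theorem~\ref{thm:sf-identity}. Moreover the argument tacitly requires the combinatorial side to be symmetric in the $x$-variables, a fact that is still open for the valley conjectures; this is precisely why the currently tractable results are the scalar-product specializations $\<\,\cdot\,,e_{n-d}h_d\>$ (Schr\"oder) and $\<\,\cdot\,,e_n\>$ (Catalan) rather than the full identity.
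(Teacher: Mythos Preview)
The statement you are attempting to prove is labelled a \emph{conjecture} in the paper, and the paper does not claim to prove it; only the Schr\"oder specialization $\langle\,\cdot\,,e_{n-d}h_d\rangle$ (Theorem~\ref{thm:valley-square-schroeder}) and, conditionally on the symmetry of \eqref{eq:LD_comb}, the Catalan specialization $\langle\,\cdot\,,e_n\rangle$ (Section~6) are established. So there is no proof in the paper to compare your proposal against.

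Your outline is the natural strategy, and you correctly locate its two genuine obstructions. First, applying $h_j^\perp$ to the combinatorial side and reasoning term by term presupposes that this side is a symmetric function; as Remark~\ref{rmk:SFcombside} emphasizes, that is not known. Second, even granting symmetry, realizing $h_j^\perp$ combinatorially on valley-decorated paths so as to match the weights of Theorem~\ref{thm:sf-identity} has not been carried out: the non-locality of the valley $\dinv$ (through the contractibility condition and the $-\#dv$ term) is precisely the technical difficulty that has so far confined progress to scalar-product cases. Note also that the right-hand side of Theorem~\ref{thm:sf-identity} produces instances carrying $\Delta_{h_{m-p}}$ with $m-p>0$, so your induction cannot remain at $m=0$; closing this loop requires the extended conjecture itself at smaller parameters, which is what you are trying to prove. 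Your reduction of the square case to the Dyck case via the $\frac{[n-k]_q}{[r]_q}$ relation (Proposition~\ref{cor:square-to-dyck}) is correct and is exactly the mechanism used in Section~6, but it only transports whatever partial information one already has on the Dyck side.
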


\begin{remark}
	\label{rmk:SFcombside}
	It should be noticed that in general the combinatorial sides of \eqref{eq:valleyDelta} and \eqref{eq:valleyDeltasquare} are not even known to be symmetric functions. Hence these conjectures include the statement that those combinatorial sums are indeed symmetric functions.
\end{remark}

The main result we want to prove is the so-called \emph{Schr\"oder case} of the valley version of the Delta conjecture and the Delta square conjecture. In other words we want to show that the identities hold if we take the scalar product with $e_{n-d}h_d$. On the combinatorial side, assuming that those sums are symmetric functions (cf.\ Remark~\ref{rmk:SFcombside}), the theory of \emph{shuffles} (cf.\ \cite{DAdderio-Iraci-VandenWyngaerd-TheBible-2019}*{Section~3.3}) tells us that taking the scalar product with $e_{n-d}h_d$ in \eqref{eq:valleyDelta} and \eqref{eq:valleyDeltasquare} gives (cf.\ end of Section~\ref{sec:CombDef})
\[\sum_{\pi \in \D(m,n)^{\bullet k, \circ d}} q^{\dinv(\pi)} t^{\area(\pi)} \quad \text{ and } \quad \sum_{\pi \in \SQ'(m,n)^{\bullet k, \circ d}} q^{\dinv(\pi)} t^{\area(\pi)}\]
respectively.
\begin{theorem}
	\label{thm:valley-schroeder}
	\[ \< \Theta_{e_k} \nabla e_{n-k}, e_{n-d} h_d \> = \sum_{\pi \in \D(n)^{\bullet k, \circ d}} q^{\dinv(\pi)} t^{\area(\pi)} \]
\end{theorem}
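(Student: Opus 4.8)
The plan is to prove a refined, extended version of the identity and recover the theorem by specialization. Using the splitting \eqref{eq:Enken} in the form $e_{n-k} = \sum_{r} E_{n-k,r}$ together with the linearity of $\Theta_{e_k}\nabla$ and of $\langle\,\cdot\,,e_{n-d}h_d\rangle$, the statement decomposes into a family of identities, one for each piece $\langle \Theta_{e_k}\nabla E_{n-k,r}, e_{n-d}h_d\rangle$. I would match each piece with the generating function of the subfamily of $\D(n)^{\bullet k,\circ d}$ cut out by fixing the value $r$ of the statistic counting the base-diagonal contacts (for Dyck paths, the vertical steps on the main diagonal) that are not decorated valleys; summing over $r$ then gives the theorem. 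Since the recursion below produces the operator $\Delta_{h_m}$ even when one starts at $m=0$, it is cleaner to prove the extended refined statement
\[ \langle \Delta_{h_m}\Theta_{e_k}\nabla E_{n-k,r}, e_{n-d}h_d\rangle = \sum_{\pi} q^{\dinv(\pi)}t^{\area(\pi)}, \]
where $\pi$ ranges over the paths of $\D(m,n)^{\bullet k,\circ d}$ with the prescribed value of the refining statistic, and to deduce the theorem as the case $m=0$.

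I would establish this by induction on the number $n$ of positive labels, showing that both sides obey one and the same recursion. For the symmetric-function side I would first move $h_d$ across the Hall scalar product, $\langle F, e_{n-d}h_d\rangle = \langle h_d^\perp F, e_{n-d}\rangle$, and then apply Theorem~\ref{thm:sf-identity} with its parameter $m$ set equal to $d$ to expand $h_d^\perp\Theta_{e_k}\nabla E_{n-k,r}$. Pairing the result with $e_{n-d}$ rewrites the left-hand side as
\[ \sum_{p=0}^d t^{d-p}\sum_{i=0}^p q^{\binom{i}{2}}\qbinom{r-p+i}{i}_q\qbinom{r}{p-i}_q \langle \Delta_{h_{d-p}}\Theta_{e_{k-i}}\nabla E_{n-d-(k-i),\,r-p+i}, e_{n-d}\rangle, \]
and every term on the right is an instance of the extended refined statement with $n-d$ positive labels (parameters $m\mapsto d-p$, $k\mapsto k-i$, $d\mapsto 0$, $r\mapsto r-p+i$), hence accessible to the inductive hypothesis.

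The core of the argument is to show that the combinatorial side satisfies this very recursion. I would do so by surgery on the bottom of the path: remove the rows on (or nearest to) the base diagonal, recording how many of the $d$ decorated peaks and how many base-diagonal contacts are consumed. The two $q$-binomials should appear as the generating functions for the interleavings of the removed decorated valleys and peaks among the bottom rows, the factor $q^{\binom{i}{2}}$ and the power $t^{d-p}$ from the induced changes in $\dinv$ and $\area$, and the passage from $d$ decorated peaks to $d-p$ zero labels should mirror exactly the replacement of a $\Theta_{e_k}$ contribution by the $\Delta_{h_{d-p}}$ term. The main obstacle is precisely this combinatorial recursion: the $\area$ bookkeeping is routine, but tracking $\dinv$ through the surgery is delicate because of the bonus (tertiary) dinv and the $-\#dv$ correction, and because the contractibility condition for decorated valleys interacts subtly with the steps on the base diagonal. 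I expect essentially all of the work to lie here, rather than in the symmetric-function manipulation.

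Finally I would settle the base cases. Since the displayed recursion always sends $d$ to $0$, the induction terminates at the Catalan-type instances $d=0$, for which the recursion is vacuous; these, together with the paths of minimal size, must be verified directly, and this is itself a substantial point (the case $k=0$ reduces to the classical $q,t$-Schröder theorem and serves as a useful consistency check). Assembling the inductive step with the base cases and summing the refined identities over $r$ then yields the statement.
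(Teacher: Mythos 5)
Your reduction step is sound as far as it goes: moving $h_d$ across the Hall pairing and applying Theorem~\ref{thm:sf-identity} with $m=d$ does rewrite $\< \Theta_{e_k} \nabla E_{n-k,r}, e_{n-d} h_d \>$ as a sum of terms $\< \Delta_{h_{d-p}} \Theta_{e_{k-i}} \nabla E_{n-d-(k-i),\,r-p+i}, e_{n-d} \>$, and a matching combinatorial recursion (the pushing algorithm, not quite the ``remove the bottom rows'' surgery you describe) does exist. The problem is where the induction lands. Your recursion sends $d$ to $0$ in a single step, so every terminal node is an instance of $\< \Delta_{h_{m}} \Theta_{e_{k}} \nabla E_{n'-k,r'}, e_{n'} \>$ with $m>0$ possible, i.e.\ the ($E_{n,k}$-refined) Catalan case of the \emph{extended} valley Delta conjecture. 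For these your recursion is vacuous ($h_0^\perp$ is the identity), and you offer no independent argument, only the remark that they ``must be verified directly'' and that $k=0$ recovers the classical $q,t$-Schr\"oder theorem. But general $m$ and $k$ with $d=0$ is not a base case one can check by hand: it carries essentially all of the content of the theorem, and indeed it is precisely identity \eqref{eq:SchroExtValleyEnk}, which this paper proves in Section~5 \emph{as a consequence of} Theorem~\ref{thm:valley-schroeder}, using exactly the pair of recursions you propose but run in the opposite direction. As written, your argument is therefore circular (or at best reduces the theorem to an equally hard unproved statement).

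The paper closes the loop differently: it first shows $\< \Theta_{e_k} \nabla E_{n-k,r}, e_{n-d} h_d \> = \< \Delta_{h_k} \Delta_{e_{n-k-d}} E_{n-k,r}, e_{n-k} \>$, identifies this with the quantity $F^{(d,k)}_{n,r}$ of \cite{DAdderio-Iraci-VandenWyngaerd-TheBible-2019}, and imports the recursion of Theorem~4.18 there. That recursion stays entirely within the four-parameter family $\< \Theta_{e_{k'}} \nabla E_{n'-k',r'}, e_{n'-d'} h_{d'} \>$ (no extension parameter $m$ ever appears), strictly decreases $n$, and bottoms out at the genuinely trivial initial condition $\delta_{r,0}\delta_{k,0}\delta_{d,0}$ at $n=0$; the matching combinatorial recursion removes \emph{all} vertical steps touching the main diagonal rather than pushing peaks. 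To repair your proposal you would either need to supply an independent proof of the extended Catalan case $\< \Delta_{h_m} \Theta_{e_k} \nabla E_{n-k,r}, e_n \>$ (which is not easier than the theorem), or replace your recursion with one, like the paper's, whose terminal cases are actually trivial.
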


\begin{theorem}
	\label{thm:valley-square-schroeder}
	\[ \< \Theta_{e_k} \nabla \omega(p_{n-k}), e_{n-d} h_d \> = \sum_{\pi \in \SQ'(n)^{\bullet k, \circ d}} q^{\dinv(\pi)} t^{\area(\pi)} \]
\end{theorem}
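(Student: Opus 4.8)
The plan is to prove Theorem~\ref{thm:valley-square-schroeder} together with the Dyck-path Theorem~\ref{thm:valley-schroeder}, by showing that each combinatorial side satisfies the same recursion as its symmetric function side and then checking a common base case. The recursion is on the size $n$, and the crucial refinement is by the integer $r$ recording the number of vertical steps whose starting point lies on the base diagonal $y = x - \shift(\pi)$; on the algebraic side this corresponds to decomposing $e_{n-k}$ and $\omega(p_{n-k})$ into the pieces $E_{n-k,r}$ of Definition~\ref{def:Enk}. For the Dyck case this decomposition is \eqref{eq:Enken}; for the square case the first step is to record an expansion $\omega(p_{N}) = \sum_{r} c_{N,r}(q)\, E_{N,r}$, which I would extract from the generating function of Definition~\ref{def:Enk} by comparing it with the expansion of $\omega(p_N)$ in the power-sum basis. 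The target then becomes the single refined identity
\[ \langle \Theta_{e_k} \nabla E_{n-k,r},\, e_{n-d} h_d \rangle = \sum_{\pi} q^{\dinv(\pi)} t^{\area(\pi)}, \]
where the sum runs over those $\pi \in \D(n)^{\bullet k, \circ d}$ having exactly $r$ vertical steps on the main diagonal: summing over $r$ recovers Theorem~\ref{thm:valley-schroeder}, while weighting by $c_{n-k,r}(q)$ before summing will reduce Theorem~\ref{thm:valley-square-schroeder} to a combinatorial comparison of square paths with (weighted) Dyck paths through their shift.

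To establish the refined identity I would rewrite $\langle f, e_{n-d} h_d \rangle = \langle h_d^\perp f, e_{n-d} \rangle$ and apply Theorem~\ref{thm:sf-identity} with $m = d$ to $f = \Theta_{e_k} \nabla E_{n-k,r}$. This expresses $h_d^\perp \Theta_{e_k} \nabla E_{n-k,r}$ as a sum of terms $\Delta_{h_{d-p}} \Theta_{e_{k-i}} \nabla E_{n-d-(k-i),\, r-p+i}$ with explicit coefficients $t^{d-p} q^{\binom{i}{2}} \qbinom{r-p+i}{i}_q \qbinom{r}{p-i}_q$; pairing with $e_{n-d}$ turns the right-hand side into a weighted sum of Catalan-type evaluations of strictly smaller paths carrying $d-p$ zero labels and $k-i$ decorated valleys. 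By linearity the same manipulation applies to $e_{n-k} = \sum_r E_{n-k,r}$ and to $\omega(p_{n-k}) = \sum_r c_{n-k,r} E_{n-k,r}$, so both symmetric function sides satisfy recursions of exactly this shape; self-adjointness of $\nabla$, $\Delta_f$ and $\mathbf{\Pi}$ for the star scalar product is what lets me move the operators across the pairing.

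On the combinatorial side I would set up the matching recursion by removing the bottom portion of the path, namely the vertical steps lying on the base diagonal together with the decorated peaks attached to them, and then contracting, landing in the smaller families $\D(d-p, n-d)^{\bullet(k-i),\circ(\dots)}$ and their square counterparts. The three coefficient factors should then be read off combinatorially: $t^{d-p}$ as the area lost below the removed level, $q^{\binom{i}{2}}$ as the dinv created among the removed decorated valleys, and the two $q$-binomials as the number of ways to reinsert the removed steps and distribute the decorations consistently with Definitions~\ref{def:valley}, \ref{def:area} and \ref{def:dinv}. Because area is measured from the base diagonal, the family $\SQ'(n)^{\bullet k, \circ d}$ fits this scheme verbatim once the shift is incorporated, and the defining condition of $\SQ'$ (a positive, non-decorated-valley label on the base diagonal) is precisely what guarantees that the bottom level is nonempty, so that the removal step is always available. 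The base case is $n = 1$ (equivalently the empty path), where the unique square path is a Dyck path and the two statements coincide.

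The main obstacle is the bookkeeping of the decorated contractible valleys under the removal operation: the correction $-\# dv$ and the bonus dinv in Definition~\ref{def:dinv}, together with the delicate contractibility condition of Definition~\ref{def:valley} (especially the first-row and $0$-label cases), make it nontrivial to check that area and dinv change exactly by the weights $t^{d-p}$ and $q^{\binom{i}{2}} \qbinom{r-p+i}{i}_q \qbinom{r}{p-i}_q$ predicted by Theorem~\ref{thm:sf-identity}; this is the step where I would follow, but carefully adapt, the treatment of \cite{DAdderio-Iraci-VandenWyngaerd-TheBible-2019}. For Theorem~\ref{thm:valley-square-schroeder} specifically, the additional difficulty is reconciling the coefficients $c_{n-k,r}(q)$ with the shift: I expect to need a shift-decomposition of square paths (in the spirit of \cite{Loehr-Warrington-square-2007} and \cite{Iraci-VandenWyngaerd-Valley-Square-2021}) showing that the $c_{n-k,r}(q)$-weighted refined Dyck counts assemble into the $\SQ'$ count, and verifying that the $\SQ'$ condition is preserved at every step of the recursion. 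Proving that this shift-decomposition is compatible with the valley decorations is, I expect, the hardest single point of the whole argument.
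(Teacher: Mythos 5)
Your overall architecture --- refining by the parameter $r$ via the functions $E_{n-k,r}$, proving matching algebraic and combinatorial recursions, and summing over $r$ (with weights $\frac{[n-k]_q}{[r]_q}$ for the square case) --- is exactly the paper's. But the specific algebraic recursion you choose is the wrong tool, and this is a genuine gap. Applying $h_d^\perp$ and Theorem~\ref{thm:sf-identity} expresses $\< \Theta_{e_k} \nabla E_{n-k,r}, e_{n-d} h_d \>$ as a combination of terms $\< \Delta_{h_{d-p}} \Theta_{e_{k-i}} \nabla E_{n-d-(k-i),\,r-p+i}, e_{n-d} \>$. These are not smaller instances of the Schr\"oder problem you are trying to solve: for $p<d$ they are Catalan cases of the \emph{extended} conjecture (carrying a genuine $\Delta_{h_{d-p}}$), which are equally unknown, so the induction does not close. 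The paper uses this identity only in Section~5, in the opposite direction, to bootstrap from the already-proven Schr\"oder case to the extended Catalan case. For the Schr\"oder case itself the paper instead first reduces via $\< \Theta_{e_k} \nabla E_{n-k,r}, e_{n-d} h_d \> = \< \Delta_{h_k} \Delta_{e_{n-k-d}} E_{n-k,r}, e_{n-k} \>$ and invokes the recursion of \cite{DAdderio-Iraci-VandenWyngaerd-TheBible-2019}*{Theorem~4.18}, which stays entirely within the family $\< \Theta_{e_{k-j}} \nabla E_{n-k-r,s}, e_{(n-r-j)-(d-v)} h_{d-v} \>$ with strictly smaller size; combinatorially this corresponds to deleting the whole bottom diagonal, not to the pushing algorithm you describe. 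You would need to replace your recursion with one of this self-contained type.

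There is a second gap specific to Theorem~\ref{thm:valley-square-schroeder}. You propose to obtain the square count from the Dyck count by weighting the refined Dyck enumerators by $c_{n-k,r}(q)=\frac{[n-k]_q}{[r]_q}$ and matching this with a shift-decomposition of $\SQ'(n)^{\bullet k,\circ d}$. The known identity of this type (\cite{Iraci-VandenWyngaerd-Valley-Square-2021}*{Corollary~3}) lives at the level of fully labelled paths; to descend to the peak-decorated (Schr\"oder) objects by extracting a scalar product you would need the combinatorial side to be a symmetric function, which is precisely what is not known (Remark~\ref{rmk:SFcombside}) --- this is why the paper's Section~6 must \emph{assume} symmetry for the analogous deduction in the extended Catalan case. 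The paper's actual proof of Theorem~\ref{thm:valley-square-schroeder} avoids this entirely: it derives a second algebraic recursion for $\< \Theta_{e_k} \nabla \frac{[n-k]_q}{[r]_q} E_{n-k,r}, e_{n-d} h_d \>$ from the first one via $\frac{[n-k]_q}{[r]_q} = 1 + q^r \frac{[n-k-r]_q}{[r]_q}$ (Theorem~\ref{thm:alg-recursion-square}) and proves a matching bottom-diagonal-removal recursion directly for $\SQ'_{q,t}(n \backslash r)^{\bullet k, \circ d}$ (Theorem~\ref{thm:combinatorial-recursion-square}), where the defining condition of $\SQ'$ and the positivity of the shift alter the $q$-binomial coefficients (e.g.\ $\qbinom{r-1}{u+v-1}_q$ in place of $\qbinom{r}{u+v}_q$). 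A minor further point: the correct base case is $n=0$ (the empty path), not $n=1$.
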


In order to prove these results, we proceed as follows. First, we recall (\eqref{eq:Enken} and \cite{DAdderio-Iraci-VandenWyngaerd-Theta-2021}*{(26)}) that \[ e_{n-k} = \sum_{r=1}^{n-k} E_{n-k,r} \qquad \text{ and } \qquad \omega(p_{n-k}) = \sum_{r=1}^{n-k} \frac{[n-k]_q}{[r]_q} E_{n-k,r} \]
when $n-k > 0$, and $e_0 = \omega(p_0) = 1$ when $n-k=0$.

Then, we find an algebraic recursion satisfied by the polynomials $\< \Theta_{e_k} \nabla E_{n-k,r}, e_{n-d} h_d \>$, which we use to derive a similar one satisfied by the polynomials $\< \Theta_{e_k} \nabla \frac{[n-k]_q}{[r]_q} E_{n-k,r}, e_{n-d} h_d \>$.

Next, we prove that the $q,t$-enumerators of the sets \[ \D(n \backslash r)^{\bullet k, \circ d} \coloneqq \{ \pi \in \D(n)^{\bullet k, \circ d} \mid \text{$r$ non-decorated vertical steps of $\pi$ touch the main diagonal} \} \] and \[ \SQ'(n \backslash r)^{\bullet k, \circ d} \coloneqq \{ \pi \in \SQ'(n)^{\bullet k, \circ d} \mid \text{$r$ non-decorated vertical steps of $\pi$ touch the base diagonal} \} \] satisfy the same recursions as the aforementioned polynomials with the same initial conditions, thus proving the equality.

Finally, we take the sum over $r$ of the identities we get, completing the proof of Theorem~\ref{thm:valley-schroeder} and Theorem~\ref{thm:valley-square-schroeder}.

\subsection{Algebraic recursions}

We begin by proving the following lemma.

\begin{lemma}
	\[ \< \Theta_{e_k} \nabla E_{n-k, r}, e_{n-d} h_d \> = \< \Delta_{h_k} \Delta_{e_{n-k-d}} E_{n-k, r}, e_{n-k} \> \]
\end{lemma}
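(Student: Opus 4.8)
The plan is to prove the statement as the special case $F=E_{n-k,r}$ of the general operator identity
\[ \langle \Theta_{e_k}\nabla F,\, e_{n-d}h_d\rangle = \langle \Delta_{h_k}\Delta_{e_{n-k-d}}F,\, e_{n-k}\rangle, \qquad F\in\Lambda^{(n-k)}, \]
which I will abbreviate as $(\star)$. Both sides are linear in $F$, so it suffices to establish $(\star)$ when $F=\Ht_\mu$ for $\mu\vdash n-k$.

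First I would pass from the Hall pairing to the star pairing. From the definition $\langle f,g\rangle_\ast=\langle f[X],\omega(g[MX])\rangle$ one obtains the conversion $\langle f,g\rangle=\langle f,(\omega g)^\ast\rangle_\ast$ for all $f,g$. Applying it to both sides of $(\star)$ and using $\omega(e_{n-d}h_d)=h_{n-d}e_d$ and $\omega(e_{n-k})=h_{n-k}$, the right-hand side becomes $\langle F,\Delta_{h_k}\Delta_{e_{n-k-d}}(h_{n-k})^\ast\rangle_\ast$ by the self-adjointness of the Delta operators. On the left, $\nabla$ and the two copies of $\mathbf{\Pi}$ inside $\Theta_{e_k}$ are self-adjoint, and a short computation shows that the adjoint of multiplication by $e_k^\ast=e_k[X/M]$ with respect to $\langle\,,\rangle_\ast$ is the Hall skewing operator $e_k^\perp$; hence the adjoint of $\Theta_{e_k}$ is $\mathbf{\Pi}^{-1}e_k^\perp\mathbf{\Pi}$. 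Transporting $\Theta_{e_k}\nabla$ onto the other factor turns $(\star)$ into the single identity in $\Lambda^{(n-k)}$
\[ \nabla\,\mathbf{\Pi}^{-1}e_k^\perp\mathbf{\Pi}\,(h_{n-d}e_d)^\ast = \Delta_{h_k}\Delta_{e_{n-k-d}}(h_{n-k})^\ast. \]

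The tool that lets me evaluate everything on the Macdonald basis is the standard identity $\langle \Ht_\mu, e_{N-d}h_d\rangle = e_{N-d}[B_\mu]$ for $\mu\vdash N$, equivalently $\langle \Ht_\mu, \sum_{d} u^{d}e_{N-d}h_d\rangle=\prod_{c\in\mu}\big(u+q^{a'_\mu(c)}t^{l'_\mu(c)}\big)$. For $k=0$ this already closes the argument: since $\nabla\Ht_\mu=e_n[B_\mu]\Ht_\mu$, both sides of $(\star)$ equal $e_n[B_\mu]\,e_{n-d}[B_\mu]$. For general $k$, evaluating $(\star)$ on $\Ht_\mu$ and dividing by the nonzero $\nabla$-eigenvalue $e_{n-k}[B_\mu]$ (using the evaluation once more for $\langle\Ht_\mu,e_{n-k}\rangle$) reduces the whole lemma to
\[ \langle \Theta_{e_k}\Ht_\mu,\, e_{n-d}h_d\rangle = h_k[B_\mu]\,e_{n-k-d}[B_\mu], \qquad \mu\vdash n-k. \]

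The hard part will be this last identity, since it requires controlling how $\Theta_{e_k}$ — that is, multiplication by $e_k[X/M]$ conjugated by $\mathbf{\Pi}$ — acts on $\Ht_\mu$. Concretely, I would expand $e_k[X/M]\,\Ht_\mu$ in the basis $\{\Ht_\nu\}_{\nu\supseteq\mu}$ via the Macdonald--Pieri rule for a vertical strip, pair the result against $\sum_d u^d e_{n-d}h_d$ using the evaluation above, and check that the sum over vertical strips $\nu/\mu$, weighted by the Pieri coefficients, by $\Pi_\nu/\Pi_\mu$, and by $\prod_{c\in\nu}(u+q^{a'_\nu(c)}t^{l'_\nu(c)})$, collapses to $h_k[B_\mu]\prod_{c\in\mu}(u+q^{a'_\mu(c)}t^{l'_\mu(c)})$. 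Carrying out this resummation of the Pieri coefficients, rather than any of the scalar-product bookkeeping, is where I expect the genuine difficulty to lie.
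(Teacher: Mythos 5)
Your reduction is correct as far as it goes, but it stops exactly where the actual content of the lemma lies. Evaluating on $\Ht_\mu$ and dividing by the eigenvalue $e_{n-k}[B_\mu]$ does reduce the statement to
\[ \langle \Theta_{e_k}\Ht_\mu,\, e_{n-d}h_d\rangle \;=\; h_k[B_\mu]\,e_{n-k-d}[B_\mu], \qquad \mu\vdash n-k, \]
but this is precisely the Macdonald-basis form of the identity $\langle \Theta_{e_k} F, e_{n-d}h_d\rangle = \langle \Delta_{h_k} F, e_{n-k-d}h_d\rangle$ that the paper simply quotes (Lemma~6.1 of the Theta-operator paper of D'Adderio--Iraci--Vanden Wyngaerd); once that is granted, what remains is exactly the routine bookkeeping you also carry out, namely two applications of $\langle \Delta_{e_a}f, h_N\rangle = \langle f, e_a h_{N-a}\rangle$ together with the commutativity of the Delta operators and $\nabla$. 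You explicitly defer the proof of this remaining identity to a future Pieri-rule computation and acknowledge it as the place where the genuine difficulty lies, so the proposal has a real gap: the one nontrivial step is announced but not performed. The proposed route (expand $e_k[X/M]\,\Ht_\mu$ over vertical strips, weight by the Pieri coefficients, by $\Pi_\nu/\Pi_\mu$, and by $\prod_{c\in\nu}\bigl(u+q^{a'_\nu(c)}t^{l'_\nu(c)}\bigr)$, then resum) is not obviously doomed, but it amounts to reproving a known summation formula, and no evidence is offered that the sum actually collapses to $h_k[B_\mu]\prod_{c\in\mu}\bigl(u+q^{a'_\mu(c)}t^{l'_\mu(c)}\bigr)$.

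Two smaller points. First, the star-scalar-product detour is dispensable, since your final reduction works directly on the Hall pairing, and it contains a slip: the star-adjoint of multiplication by $e_k[X/M]$ is $h_k^\perp$, not $e_k^\perp$; indeed $\langle h_k^\perp f, g\rangle_\ast = \langle f, h_k\,\omega(g[MX])\rangle = \langle f, \omega(e_k\, g[MX])\rangle = \langle f, e_k[X/M]\,g\rangle_\ast$. Second, the $k=0$ base case you verify is correct but gives no traction for general $k$, because $\Theta_{e_k}$ does not act diagonally on the Macdonald basis, which is exactly why the quoted lemma (or an equivalent summation formula) is needed.
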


\begin{proof}
	By \cite{DAdderio-Iraci-VandenWyngaerd-Theta-2021}*{Lemma~6.1}, we have \[ \< \Theta_{e_k} \nabla E_{n-k, r}, e_{n-d} h_d \> = \< \Delta_{h_k} \nabla E_{n-k, r}, e_{n-k-d} h_d \> \] and by repeatedly applying the well-known identity $\< \Delta_{e_a} f, h_n \> = \< f, e_a h_{n-a} \>$ (see \cite{DAdderio-Iraci-VandenWyngaerd-TheBible-2019}*{Lemma~4.1}), we have
	\begin{align*}
		\< \Delta_{h_k} \nabla E_{n-k, r}, e_{n-k-d} h_d \> & = \< \Delta_{e_{n-k-d}} \Delta_{h_k} \nabla E_{n-k, r}, h_{n-k} \> \\
		& = \< \nabla \Delta_{h_k} \Delta_{e_{n-k-d}} E_{n-k, r}, h_{n-k} \> \\
		& = \< \Delta_{h_k} \Delta_{e_{n-k-d}} E_{n-k, r}, e_{n-k} \> 
	\end{align*}
	as desired.
\end{proof}

Now, the polynomial $\< \Delta_{h_k} \Delta_{e_{n-k-d}} E_{n-k, r}, e_{n-k} \>$ coincides with the expression $F_{n,r}^{(d,k)}$ in \cite{DAdderio-Iraci-VandenWyngaerd-TheBible-2019}*{Section~4.3, (4.77)}, so by \cite{DAdderio-Iraci-VandenWyngaerd-TheBible-2019}*{Theorem~4.18}, up to some simple rewriting, we have the following.

\begin{proposition}
	\label{prop:preliminary-alg-recursion}
	The expressions $\< \Theta_{e_k} \nabla E_{n-k, r}, e_{n-d} h_d \>$ satisfy the recursion
	\begin{align*}
		\< \Theta_{e_k} \nabla E_{n-k, r}, e_{n-d} h_d \> & = \sum_{j=0}^k t^{n-r-j} \sum_{s=0}^{n-r} \sum_{v=0}^d q^{\binom{v}{2}} \qbinom{r}{v}_q \qbinom{r+j-1}{j} \qbinom{v+j+s-1}{s} \\
		& \quad \times \< \Theta_{e_{k-j}} \nabla E_{n-k-r, s}, e_{(n-r-j)-(d-v)} h_{d-v} \> 
	\end{align*}
	with initial conditions $\< \Theta_{e_k} \nabla E_{n-k, r}, e_{n-d} h_d \> = \delta_{r,0} \delta_{k,0} \delta_{d,0}$.
\end{proposition}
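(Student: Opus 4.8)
The plan is to combine the preceding lemma with the recursion for the polynomials $F_{n,r}^{(d,k)}$ established in \cite{DAdderio-Iraci-VandenWyngaerd-TheBible-2019}. By the lemma,
\[ \langle \Theta_{e_k} \nabla E_{n-k, r}, e_{n-d} h_d \rangle = \langle \Delta_{h_k} \Delta_{e_{n-k-d}} E_{n-k, r}, e_{n-k} \rangle = F_{n,r}^{(d,k)}, \]
where the last equality is the identification with \cite{DAdderio-Iraci-VandenWyngaerd-TheBible-2019}*{(4.77)}. Thus the whole content of the proposition is a transcription of the known recursion \cite{DAdderio-Iraci-VandenWyngaerd-TheBible-2019}*{Theorem~4.18} for $F_{n,r}^{(d,k)}$ back into the $\Theta$--$\nabla$ language, obtained by applying the lemma a second time, in the reverse direction, to each summand on the right-hand side of that recursion.

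Concretely, I would proceed as follows. First, transcribe \cite{DAdderio-Iraci-VandenWyngaerd-TheBible-2019}*{Theorem~4.18}, which expresses $F_{n,r}^{(d,k)}$ as a triple sum over indices $j,s,v$ whose summand is a product of a power of $t$, a power of $q$, and three $q$-binomial coefficients, times a shifted polynomial of the form $F_{n-r-j,\,s}^{(d-v,\,k-j)}$. Second, apply the lemma to each shifted polynomial under the substitution $n \mapsto n-r-j$, $k \mapsto k-j$, $r \mapsto s$, $d \mapsto d-v$; using the identities $(n-r-j)-(k-j) = n-r-k$ and $(n-r-j)-(k-j)-(d-v) = n-r-k-d+v$, this yields
\[ F_{n-r-j,\,s}^{(d-v,\,k-j)} = \langle \Theta_{e_{k-j}} \nabla E_{n-k-r,\, s},\, e_{(n-r-j)-(d-v)} h_{d-v} \rangle, \]
which is exactly the polynomial appearing on the right-hand side of the asserted recursion. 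Substituting this back turns the $F$-recursion into the claimed $\Theta$-recursion.

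The only genuine work — and the step I expect to be the sole obstacle — is the \emph{simple rewriting}: verifying that, after the above change of variables, the scalar coefficient coming from \cite{DAdderio-Iraci-VandenWyngaerd-TheBible-2019}*{Theorem~4.18} matches term by term the factor $t^{n-r-j}\, q^{\binom{v}{2}} \qbinom{r}{v}_q \qbinom{r+j-1}{j} \qbinom{v+j+s-1}{s}$, and that the summation ranges $0 \le j \le k$, $0 \le s \le n-r$, $0 \le v \le d$ agree (out-of-range $q$-binomials vanishing automatically). This is a purely formal matching of exponents and binomials against the cited statement and uses no new idea. Finally, the initial conditions are inherited from those of $F_{n,r}^{(d,k)}$ via the lemma and are verified directly at the base case $n=0$, where the constraints force $k=r=d=0$ and $\langle \Theta_{e_0} \nabla E_{0,0},\, e_0 h_0 \rangle = \langle 1, 1 \rangle = 1 = \delta_{r,0}\delta_{k,0}\delta_{d,0}$, using $E_{0,0}=1$ and $\Theta_{e_0}=\mathrm{id}$.
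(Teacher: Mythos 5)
Your proposal is correct and follows essentially the same route as the paper: the authors likewise identify $\< \Theta_{e_k} \nabla E_{n-k,r}, e_{n-d} h_d \>$ with $F_{n,r}^{(d,k)}$ via the preceding lemma and then invoke the recursion of \cite{DAdderio-Iraci-VandenWyngaerd-TheBible-2019}*{Theorem~4.18} ``up to some simple rewriting.'' Your explicit reverse substitution $n \mapsto n-r-j$, $k \mapsto k-j$, $r \mapsto s$, $d \mapsto d-v$ and the base-case check are exactly the omitted details.
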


Notice that this implies that $\< \Theta_{e_k} \nabla E_{n-k, r}, e_{n-d} h_d \>$ is actually a polynomial in $\N[q,t]$. We want to rewrite it slightly, for which we need the following lemma.

\begin{lemma}
	\label{lem:chu-vandermonde}
	\[ q^{\binom{v}{2}} \qbinom{r}{v}_q \qbinom{r+j-1}{j} = \sum_{u=0}^{r-v} q^{\binom{u}{2}} \qbinom{u+v}{u}_q q^{\binom{u+v}{2}} \qbinom{r}{u+v}_q \qbinom{v+j-1}{j-u}_q \]
\end{lemma}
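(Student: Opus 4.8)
The plan is to reduce the claimed identity, after a single cancellation, to one application of the $q$-Vandermonde (or $q$-Chu--Vandermonde) summation
\[ \qbinom{M+N}{K}_q = \sum_{u \geq 0} q^{u(N-K+u)} \qbinom{M}{u}_q \qbinom{N}{K-u}_q. \]
First I would dispose of the degenerate case $v > r$: then $\qbinom{r}{v}_q = 0$ on the left, while $\qbinom{r}{u+v}_q = 0$ for every $u \geq 0$ on the right, so both sides vanish. Hence I may assume $v \leq r$ and divide through by the nonzero factor $\qbinom{r}{v}_q$. The key algebraic simplification is that the two Gaussian binomials in each summand collapse into a single one: expanding in $q$-factorials gives
\[ \frac{\qbinom{u+v}{u}_q \, \qbinom{r}{u+v}_q}{\qbinom{r}{v}_q} = \qbinom{r-v}{u}_q. \]

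Next I would clean up the powers of $q$. Using $\binom{u+v}{2} = \binom{u}{2} + \binom{v}{2} + uv$, the exponent appearing on the right is $\binom{u}{2} + \binom{u+v}{2} = 2\binom{u}{2} + \binom{v}{2} + uv$; cancelling the common factor $q^{\binom{v}{2}}$ turns it into $2\binom{u}{2} + uv = u(u+v-1)$. The identity to be proved therefore reduces to
\[ \qbinom{r+j-1}{j}_q = \sum_{u} q^{u(u+v-1)} \qbinom{r-v}{u}_q \qbinom{v+j-1}{j-u}_q, \]
where the precise summation range is immaterial, since terms with $u > r-v$ or with $u > j$ vanish automatically on one side or the other.

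Finally I would recognise this as the $q$-Vandermonde summation above with $M = r-v$, $N = v+j-1$, and $K = j$: indeed $M + N = r + j - 1$ together with $K = j$ produces the left-hand binomial, the lower index $K - u = j - u$ is exactly the one appearing, and the exponent specialises as $u(N - K + u) = u\big((v+j-1) - j + u\big) = u(u+v-1)$. I expect the only genuine obstacle to be choosing the correct form of $q$-Vandermonde while tracking the $q$-powers: there are two standard versions differing in the placement of the $q$-power, and only the one carrying exponent $u(N-K+u)$ reproduces $u(u+v-1)$ after the specialisation, whereas the complementary form does not. Once the right parameters are pinned down, the binomial collapse and the exponent identity $\binom{u+v}{2}=\binom{u}{2}+\binom{v}{2}+uv$ are routine, so the proof is immediate.
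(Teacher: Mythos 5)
Your proof is correct and follows essentially the same route as the paper's: both rest on the $q$-Chu--Vandermonde summation with parameters $M=r-v$, $N=v+j-1$, $K=j$, combined with the subset-of-a-subset identity $\qbinom{r}{v}_q\qbinom{r-v}{u}_q=\qbinom{u+v}{u}_q\qbinom{r}{u+v}_q$ and the exponent identity $\binom{u+v}{2}=\binom{u}{2}+\binom{v}{2}+uv$; the only difference is that you divide through by $\qbinom{r}{v}_q$ and match both sides, whereas the paper expands the left-hand side directly.
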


\begin{proof}
	We have
	\begin{align*}
		q^{\binom{v}{2}} \qbinom{r}{v}_q \qbinom{r+j-1}{j} & = q^{\binom{v}{2}} \qbinom{r}{v}_q \sum_{u=0}^{r-v} q^{u(u+v-1)} \qbinom{r-v}{u}_q \qbinom{v+j-1}{u+v-1} \\
		& = q^{\binom{v}{2}} \qbinom{r}{v}_q \sum_{u=0}^{r-v} q^{u(u-1)} \qbinom{r-v}{u}_q q^{uv} \qbinom{v+j-1}{j-u} \\
		& = \sum_{u=0}^{r-v} q^{\binom{u}{2}} \qbinom{r}{v}_q \qbinom{r-v}{u}_q q^{\binom{u}{2} + uv + \binom{v}{2}} \qbinom{v+j-1}{j-u} \\
		& = \sum_{u=0}^{r-v} q^{\binom{u}{2}} \qbinom{u+v}{u}_q q^{\binom{u+v}{2}} \qbinom{r}{u+v}_q \qbinom{v+j-1}{j-u}_q \\
	\end{align*}
	where in the first equality we used the well-known $q$-Chu-Vandermonde identity \cite{Andrews-Book-Partitions}*{(3.3.10)} and the other ones are simple algebraic manipulations.
\end{proof}

Combining Proposition~\ref{prop:preliminary-alg-recursion} and Lemma~\ref{lem:chu-vandermonde}, we get the following.

\begin{theorem}
	\label{thm:alg-recursion}
	The expressions $\< \Theta_{e_k} \nabla E_{n-k, r}, e_{n-d} h_d \>$ satisfy the recursion
	\begin{align*}
		\< \Theta_{e_k} \nabla E_{n-k, r}, e_{n-d} h_d \> & = \sum_{j=0}^k t^{n-r-j} \sum_{s=0}^{n-r} \sum_{v=0}^d \sum_{u=0}^{r-v} q^{\binom{u}{2}} \qbinom{u+v}{u}_q q^{\binom{u+v}{2}} \qbinom{r}{u+v}_q \qbinom{v+j-1}{j-u}_q \\
		& \quad \times \qbinom{v+j+s-1}{s}_q \< \Theta_{e_{k-j}} \nabla E_{n-k-r, s}, e_{(n-r-j)-(d-v)} h_{d-v} \> 
	\end{align*}
	with initial conditions $\< \Theta_{e_k} \nabla E_{n-k, r}, e_{n-d} h_d \> = \delta_{r,0} \delta_{k,0} \delta_{d,0}$.
\end{theorem}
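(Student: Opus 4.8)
The plan is to obtain Theorem~\ref{thm:alg-recursion} as an immediate consequence of Proposition~\ref{prop:preliminary-alg-recursion} together with the $q$-Chu--Vandermonde rewriting of Lemma~\ref{lem:chu-vandermonde}. Nothing genuinely new needs to be proved: the content is entirely in those two preceding results, and what remains is a substitution. I would therefore keep the argument short.

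First I would start from the recursion in Proposition~\ref{prop:preliminary-alg-recursion}, whose summand over $j,s,v$ carries the factor $q^{\binom{v}{2}} \qbinom{r}{v}_q \qbinom{r+j-1}{j}$. Since this factor depends only on $r$, $v$, and $j$ (and not on $s$, nor on the scalar product appearing on the right), I would apply Lemma~\ref{lem:chu-vandermonde} to replace it by $\sum_{u=0}^{r-v} q^{\binom{u}{2}} \qbinom{u+v}{u}_q q^{\binom{u+v}{2}} \qbinom{r}{u+v}_q \qbinom{v+j-1}{j-u}_q$, leaving the remaining factor $\qbinom{v+j+s-1}{s}$ and the term $\< \Theta_{e_{k-j}} \nabla E_{n-k-r, s}, e_{(n-r-j)-(d-v)} h_{d-v} \>$ untouched.

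Next, I would pull the newly introduced finite sum over $u$ out past the $q$-binomial in $s$ and past the scalar product (all of which are independent of $u$), turning the triple sum over $j,s,v$ into the quadruple sum over $j,s,v,u$ displayed in the statement; the prefactor $t^{n-r-j}$ is unaffected. Finally, the initial conditions are literally the same as in Proposition~\ref{prop:preliminary-alg-recursion}, so they carry over verbatim, completing the proof.

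The only point requiring any care — and it is the closest thing to an obstacle — is purely bookkeeping: one must check that the index ranges match after the substitution (in particular that introducing $0 \le u \le r-v$ does not interact with the outer ranges $0 \le v \le d$ and $0 \le s \le n-r$), and that no factor is duplicated or dropped when the expression from Lemma~\ref{lem:chu-vandermonde} is inserted inside the existing summation. Once these are confirmed, the identity follows term by term.
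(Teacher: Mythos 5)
Your proposal is correct and takes essentially the same route as the paper, which likewise obtains Theorem~\ref{thm:alg-recursion} simply by substituting the identity of Lemma~\ref{lem:chu-vandermonde} for the factor $q^{\binom{v}{2}} \qbinom{r}{v}_q \qbinom{r+j-1}{j}$ in the recursion of Proposition~\ref{prop:preliminary-alg-recursion} and interchanging the resulting finite sums. The bookkeeping you flag (index ranges and independence of the inserted sum over $u$ from $s$ and the scalar product) is exactly the only content of the step, and it checks out.
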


Using Theorem~\ref{thm:alg-recursion}, we can get a similar recursion for the other family of polynomials.

\begin{theorem}
	\label{thm:alg-recursion-square}
	The expressions $\< \Theta_{e_k} \nabla \frac{[n-k]_q}{[r]_q} E_{n-k, r}, e_{n-d} h_d \>$ satisfy the recursion
	\begin{align*}
		\< \Theta_{e_k} \nabla \frac{[n-k]_q}{[r]_q} E_{n-k, r}, & e_{n-d} h_d \> = \< \Theta_{e_k} \nabla E_{n-k, r}, e_{n-d} h_d \> \\
		& + \sum_{j=0}^k q^r t^{n-r-j} \sum_{s=0}^{n-r} \sum_{v=0}^d \sum_{u=0}^{r-v} q^{\binom{u}{2}} \qbinom{u+v}{u}_q q^{\binom{u+v}{2}} \qbinom{r-1}{u+v-1}_q \qbinom{v+j}{j-u}_q \\
		& \quad \times \qbinom{v+j+s-1}{s-1}_q \< \Theta_{e_{k-j}} \nabla \frac{[n-k-r]_q}{[s]_q} E_{n-k-r, s}, e_{(n-r-j)-(d-v)} h_{d-v} \> 
	\end{align*}
	with initial conditions $\< \Theta_{e_k} \nabla \frac{[n-k]_q}{[r]_q} E_{n-k, r}, e_{n-d} h_d \> = \delta_{r,0} \delta_{k,0} \delta_{d,0}$.
\end{theorem}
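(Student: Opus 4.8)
The plan is to reduce everything to Theorem~\ref{thm:alg-recursion}, exploiting the fact that $\frac{[n-k]_q}{[r]_q}$ is a scalar in $\Q(q)$ and therefore commutes with every operator involved and pulls out of the scalar product. Writing $A_{n,r}^{(d,k)} \coloneqq \< \Theta_{e_k}\nabla E_{n-k,r}, e_{n-d}h_d\>$ for the quantities of Theorem~\ref{thm:alg-recursion}, the object to analyze is simply $\frac{[n-k]_q}{[r]_q} A_{n,r}^{(d,k)}$, so the whole statement becomes an identity about this scalar prefactor. I would treat the recursion for $r \geq 1$ (the case $r=0$ being the base of the induction, which is covered by the stated initial conditions, the same for both families).

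First I would use the elementary splitting of $q$-integers $[n-k]_q = [r]_q + q^r [n-k-r]_q$, valid for $r \geq 1$. Dividing by $[r]_q$ yields $\frac{[n-k]_q}{[r]_q} = 1 + q^r \frac{[n-k-r]_q}{[r]_q}$, hence
\[ \frac{[n-k]_q}{[r]_q}\, A_{n,r}^{(d,k)} = A_{n,r}^{(d,k)} + q^r\, \frac{[n-k-r]_q}{[r]_q}\, A_{n,r}^{(d,k)}. \]
The first summand is exactly the first term on the right-hand side of the target recursion. For the second summand I would substitute the recursion of Theorem~\ref{thm:alg-recursion} for $A_{n,r}^{(d,k)}$, and then convert each factor $A_{n-r-j,s}^{(d-v,k-j)}$ back into the weighted quantity via $A_{n-r-j,s}^{(d-v,k-j)} = \frac{[s]_q}{[n-k-r]_q}\< \Theta_{e_{k-j}}\nabla \frac{[n-k-r]_q}{[s]_q} E_{n-k-r,s}, e_{(n-r-j)-(d-v)}h_{d-v}\>$, using $(n-r-j)-(k-j) = n-k-r$. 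The terms with $s=0$ or $n-k-r=0$ vanish on both sides, so the divisions are harmless; the two denominators $[n-k-r]_q$ then cancel, leaving the clean scalar $\frac{[s]_q}{[r]_q}$ multiplying the coefficient of Theorem~\ref{thm:alg-recursion}, together with the overall factor $q^r t^{n-r-j}$.

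The one genuine computation, and the step I expect to be the main (though routine) obstacle, is the $q$-binomial identity
\[ \frac{[s]_q}{[r]_q}\, \qbinom{r}{u+v}_q \qbinom{v+j-1}{j-u}_q \qbinom{v+j+s-1}{s}_q = \qbinom{r-1}{u+v-1}_q \qbinom{v+j}{j-u}_q \qbinom{v+j+s-1}{s-1}_q, \]
which is precisely what turns the coefficient produced above into the one in the statement (the common prefactors $q^{\binom{u}{2}}\qbinom{u+v}{u}_q q^{\binom{u+v}{2}}$ are untouched). I would establish it by three applications of the absorption rule $\qbinom{a}{b}_q = \frac{[a]_q}{[b]_q}\qbinom{a-1}{b-1}_q$: namely $\frac{1}{[r]_q}\qbinom{r}{u+v}_q = \frac{1}{[u+v]_q}\qbinom{r-1}{u+v-1}_q$, then $[s]_q \qbinom{v+j+s-1}{s}_q = [v+j]_q \qbinom{v+j+s-1}{s-1}_q$, and finally $\frac{[v+j]_q}{[u+v]_q}\qbinom{v+j-1}{j-u}_q = \qbinom{v+j}{j-u}_q$ (using $[u+v]_q = [v+u]_q$). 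Chaining these collapses the left-hand side onto the right-hand side, and the degenerate cases $u+v=0$ and $s=0$ make both sides vanish identically, so no separate argument is required.

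Finally, since the initial conditions of the two families are literally identical ($\delta_{r,0}\delta_{k,0}\delta_{d,0}$), matching them is immediate, and assembling the two summands above reconstructs exactly the recursion in the statement, completing the proof.
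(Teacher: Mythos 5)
Your proposal is correct and follows essentially the same route as the paper's own proof: pull the scalar $\frac{[n-k]_q}{[r]_q}$ out, split it as $1+q^r\frac{[n-k-r]_q}{[r]_q}$, apply Theorem~\ref{thm:alg-recursion} to the second summand, reinsert the factor $\frac{[n-k-r]_q}{[s]_q}$ at the cost of $\frac{[s]_q}{[r]_q}$, and absorb that ratio into the $q$-binomials exactly as in your displayed identity. The only point the paper spells out that you gloss slightly differently is the convention $\frac{[n-k]_q}{[r]_q}E_{n-k,r}=1$ when $n-k=r=0$, needed so that the initial conditions and the identity $\omega(p_0)=1$ are consistent; your treatment of $r=0$ as the base case is an acceptable equivalent.
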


\begin{proof}
	We need to be careful with the initial conditions, as we have the factor $\frac{[n-k]_q}{[r]_q}$ that becomes $\frac{[0]_q}{[0]_q}$ when $n-k=r=0$. However, the property we actually need is \[ \omega(p_{n-k}) = \sum_{r=1}^{n-k} \frac{[n-k]_q}{[r]_q} E_{n-k,r}, \] which only holds for $n-k>0$. For $n-k = 0$ we have $\omega(p_0) = 1$, so we need to set $\frac{[n-k]_q}{[r]_q} E_{n-k,r} = 1$ whenever $n-k=r=0$ for our identities to hold, which satisfies the initial conditions.

	Using Theorem~\ref{thm:alg-recursion}, we have
	\begin{align*}
		\< \Theta_{e_k} \nabla & \frac{[n-k]_q}{[r]_q} E_{n-k, r}, e_{n-d} h_d \> = \frac{[n-k]_q}{[r]_q} \< \Theta_{e_k} \nabla E_{n-k, r}, e_{n-d} h_d \>\\
		%
		& =\left( 1 + q^r \frac{[n-k-r]_q}{[r]_q} \right)\< \Theta_{e_k} \nabla E_{n-k, r}, e_{n-d} h_d \>\\
		%
		& =\< \Theta_{e_k} \nabla E_{n-k, r}, e_{n-d} h_d \> \\
		& \quad + \sum_{j=0}^k q^r t^{n-r-j} \sum_{s=0}^{n-r} \sum_{v=0}^d \sum_{u=0}^{r-v} q^{\binom{u}{2}} \qbinom{u+v}{u}_q q^{\binom{u+v}{2}} \qbinom{r}{u+v}_q \qbinom{v+j-1}{j-u}_q \\
		& \quad \quad \times \qbinom{v+j+s-1}{s}_q \frac{[s]_q}{[r]_q} \< \Theta_{e_{k-j}} \nabla \frac{[n-k-r]_q}{[s]_q} E_{n-k-r, s}, e_{(n-r-j)-(d-v)} h_{d-v} \> \\
		& = \< \Theta_{e_k} \nabla E_{n-k, r}, e_{n-d} h_d \> \\
		& \quad + \sum_{j=0}^k q^r t^{n-r-j} \sum_{s=0}^{n-r} \sum_{v=0}^d \sum_{u=0}^{r-v} q^{\binom{u}{2}} \qbinom{u+v}{u}_q q^{\binom{u+v}{2}} \qbinom{r-1}{u+v-1}_q \qbinom{v+j}{j-u}_q \\
		& \quad \quad \times \qbinom{v+j+s-1}{s-1}_q \< \Theta_{e_{k-j}} \nabla \frac{[n-k-r]_q}{[s]_q} E_{n-k-r, s}, e_{(n-r-j)-(d-v)} h_{d-v} \> 
	\end{align*}
	as desired.
\end{proof}

Once again notice that this implies that $\< \Theta_{e_k} \nabla \frac{[n-k]_q}{[r]_q} E_{n-k, r}, e_{n-d} h_d \>$ is also a polynomial in $\N[q,t]$.

\subsection{Combinatorial recursions}

Let \[ \D_{q,t}(n \backslash r)^{\bullet k, \circ d} \coloneqq \sum_{\pi \in \D(n \backslash r)^{\bullet k, \circ d}} q^{\dinv(\pi)} t^{\area(\pi)} \] and \[ \SQ'_{q,t}(n \backslash r)^{\bullet k, \circ d} \coloneqq \sum_{\pi \in \SQ'(n \backslash r)^{\bullet k, \circ d}} q^{\dinv(\pi)} t^{\area(\pi)} \] be the $q,t$-enumerators of our set of lattice paths. We have the following results.

\begin{theorem}
	\label{thm:combinatorial-recursion}
	The polynomials $\D_{q,t}(n \backslash r)^{\bullet k, \circ d}$ satisfy the recursion
	\begin{align*}
		\D_{q,t}(n \backslash r)^{\bullet k, \circ d}  = \sum_{j=0}^k t^{n-r-j} \sum_{s=0}^{n-r} \sum_{v=0}^d \sum_{u=0}^{r-v} & q^{\binom{u}{2}} \qbinom{u+v}{u}_q q^{\binom{u+v}{2}} \qbinom{r}{u+v}_q \qbinom{v+j-1}{j-u}_q \\
		& \quad \times \qbinom{v+j+s-1}{s}_q \D_{q,t}(n-r-j \backslash s)^{\bullet k-j, \circ d-(r-v)} 
	\end{align*}
	with initial conditions $\D_{q,t}(0 \backslash r)^{\bullet k, \circ d} = \delta_{r,0} \delta_{k,0} \delta_{d,0}$.
\end{theorem}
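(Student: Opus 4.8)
The plan is to establish the recursion by a direct combinatorial decomposition of the paths in $\D(n \backslash r)^{\bullet k, \circ d}$ according to their bottom row, following the strategy used for the rise case in \cite{DAdderio-Iraci-VandenWyngaerd-TheBible-2019} but adapting every step to decorated valleys. A preliminary simplification makes this feasible: since every object here is a \emph{Dyck} path, no vertical step ever lies below the main diagonal, so the bonus (tertiary) dinv of Definition~\ref{def:dinv} always vanishes. Consequently $\dinv$ is carried entirely by primary and secondary inversions together with the correction $-\#dv$, and the whole analysis reduces to controlling how these inversions behave under the removal of a bottom row.

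First I would describe the decomposition. Given $\pi \in \D(n \backslash r)^{\bullet k, \circ d}$, consider the vertical steps lying on the main diagonal (the steps at height $0$), together with the decorated valleys that contract onto it. By definition exactly $r$ of these height-$0$ steps carry no valley decoration; among those, say $r - v$ are decorated peaks and $v$ are plain, while $j$ of them are decorated valleys. Removing all $r + j$ such steps and lowering every surviving step by one diagonal produces a path $\pi'$ of size $n - r - j$ with $k - j$ decorated valleys and $d - (r - v)$ decorated peaks, and I let $s$ record the number of non-decorated vertical steps of $\pi'$ touching its new main diagonal, so that $\pi' \in \D(n - r - j \backslash s)^{\bullet k - j, \circ d - (r-v)}$. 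The remaining index $u$ is the extra degree of freedom recording how the removed decorated peaks interleave with the removed plain steps and valleys; it is exactly the summation variable introduced by Lemma~\ref{lem:chu-vandermonde} on the algebraic side. I would then check that this assignment is a bijection between $\D(n \backslash r)^{\bullet k, \circ d}$ and the disjoint union, over admissible $(j, s, v, u)$, of a set of labelled ``bottom data'' with $\D(n - r - j \backslash s)^{\bullet k - j, \circ d - (r-v)}$, paying attention to the preservation of contractibility for valleys that drop from height $1$ to height $0$.

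Next I would track the two statistics. Because area ignores labels and decorations and no step sits below the main diagonal, lowering the $n - r - j$ surviving steps by one diagonal decreases the area by exactly $n - r - j$, which accounts for the prefactor $t^{n-r-j}$. For dinv, the removed steps all sit at height $0$, so they produce primary inversions only among themselves and secondary inversions only with the steps that land at height $1$; the decorated peaks carry the largest labels, so no height-$1$ step can invert against a bottom peak, and only the $s$ non-decorated height-$1$ steps act as upper elements of secondary inversions (the condition $i \notin dv$ rules out the decorated valleys). Together with the $-\#dv$ contribution of the $j$ removed valleys, summing $q^{\dinv}$ over all admissible labellings and decorations of the bottom row should yield the generating function $q^{\binom{u}{2}} \qbinom{u+v}{u}_q q^{\binom{u+v}{2}} \qbinom{r}{u+v}_q \qbinom{v+j-1}{j-u}_q \qbinom{v+j+s-1}{s}_q$, where $\qbinom{v+j+s-1}{s}_q$ comes from shuffling the $s$ height-$1$ steps through the $v+j$ non-peak bottom steps, $\qbinom{v+j-1}{j-u}_q$ from placing the decorated valleys, and $q^{\binom{u}{2}}\qbinom{u+v}{u}_q q^{\binom{u+v}{2}}\qbinom{r}{u+v}_q$ from the relative order of the decorated peaks and plain steps on the diagonal.

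The main obstacle is precisely this last dinv computation: showing that the raw generating function of all valid bottom configurations factors into the displayed product of $q$-powers and $q$-binomials. I expect the cleanest route is to let the decomposition first produce the coefficient in the ``pre-Chu--Vandermonde'' shape $q^{\binom{v}{2}} \qbinom{r}{v}_q \qbinom{r+j-1}{j}$ appearing in Proposition~\ref{prop:preliminary-alg-recursion}, and then to invoke Lemma~\ref{lem:chu-vandermonde} verbatim to rewrite it in the stated form; alternatively, the variable $u$ can be read off directly from how the peaks sit among the other bottom steps. Finally, the initial conditions are immediate, since the only path of size $0$ is the empty path, forcing $r = k = d = 0$ and hence $\D_{q,t}(0 \backslash r)^{\bullet k, \circ d} = \delta_{r,0}\delta_{k,0}\delta_{d,0}$. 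Assembling the area factor, the dinv generating function, and the bijection over $(j,s,v,u)$ then yields the claimed recursion.
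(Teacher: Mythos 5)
Your overall strategy coincides with the paper's: peel off every vertical step touching the main diagonal, observe that the area drops by $n-r-j$, and compute the $q$-weight of the removed bottom configuration. The initial conditions and the area factor are handled exactly as in the paper, and your observation that the bonus dinv vanishes for Dyck paths is correct. However, there is a genuine gap in the central dinv computation, and it starts with a bookkeeping error: you place all $r-v$ decorated peaks among the $r$ height-$0$ steps that are not decorated valleys (``among those, say $r-v$ are decorated peaks and $v$ are plain''). In fact the two decorations are not disjoint: the parameter $u$ is precisely the number of height-$0$ steps that are simultaneously decorated peaks and decorated valleys, so that the non-valley steps split into $r-v-u$ peak-only steps and $u+v$ undecorated ones, while the $j$ decorated valleys split into $u$ doubly decorated steps and $j-u$ valley-only ones. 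Without this identification the factors $q^{\binom{u}{2}}\qbinom{u+v}{u}_q$ (primary dinv between the undecorated steps and the doubly decorated ones, which can never be adjacent) and $\qbinom{v+j-1}{j-u}_q$ (interlacing with the $j-u$ valley-only steps, whose first element is forced to be undecorated) have no combinatorial meaning, and the factorization you ``expect'' cannot be verified.

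Your proposed fallback --- derive the coefficient in the pre-Chu--Vandermonde shape $q^{\binom{v}{2}}\qbinom{r}{v}_q\qbinom{r+j-1}{j}$ combinatorially and then invoke Lemma~\ref{lem:chu-vandermonde} --- runs in the wrong direction. That coefficient is the one native to the rise recursion of \cite{DAdderio-Iraci-VandenWyngaerd-TheBible-2019}; in this paper it arises purely algebraically (Proposition~\ref{prop:preliminary-alg-recursion}), and Lemma~\ref{lem:chu-vandermonde} exists precisely because only the $u$-refined form admits a direct interpretation in terms of valley-decorated objects. There is no reason to expect the coarse form to fall out of the valley combinatorics, and attempting it reintroduces exactly the difficulty you are trying to sidestep. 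The correct completion is your second alternative, executed as in the paper: fix $u$ as the number of doubly decorated height-$0$ steps, and read off each $q$-binomial as the generating function of an interlacing of the four classes of bottom steps (undecorated, peak-only, valley-only, both) together with the $s$ surviving non-valley steps at height $1$, with the stated forced positions accounting for the shifts by $1$ in the binomials.
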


\begin{proof}
	The initial conditions are trivial, as the only Dyck path of size $0$ has $0$ steps on the main diagonal, $0$ decorated valleys, and $0$ decorated peaks.

	We give an overview of the combinatorial interpretations of all the variables appearing in this formula. We say that a vertical step of a path is \emph{at height $i$} if there are $i$ whole cells in its row between it and the main diagonal.
	
	\begin{itemize}
		\item $r$ is the number of vertical steps at height $0$ that are not decorated valleys.
		\item $j$  is the number of vertical steps at height $0$ that are decorated valleys.
		\item $r-v$ is the number of decorated peaks at height $0$.
		\item $u$ is the number of decorated peaks at height $0$ that are also decorated valleys.
		\item $u+v$ is the number of vertical steps at height $0$ without any kind of decoration.
		\item $s$ is the number of vertical steps at height $1$ that are not decorated valleys.
	\end{itemize}
	
	The recursive step consists of removing all the steps that touch the main diagonal. There are $r+j$ vertical steps that touch the main diagonal, of which $j$ are decorated valleys and $r-v$ are decorated peaks (which are not necessarily disjoint), so after the recursive step we end up with a path in $\D(n-r-j \backslash s)^{\bullet k-j, \circ d-(r-v)}$.

	Let us look at what happens to the statistics of the path. 
	
	The area goes down by the size (i.e.\ $n$) minus the number of vertical steps at height $0$ (i.e.\ $r+j$). This explains the term $t^{n-r-j}$.   
	
	The factor $q^{\binom{u+v}{2}}$ takes into account the primary dinv among all the vertical steps at height $0$ that are neither decorated valleys nor decorated peaks.
	
	The factor $q^{\binom{u}{2}} \qbinom{u+v}{v}_q$ takes into account the primary dinv between all the vertical steps at height $0$ that are neither decorated valleys nor decorated peaks ($u+v$ of them), and all the vertical steps at height $0$ that are both decorated valleys and decorated peaks ($u$ of them), and the expression is explained by the fact that the latter cannot be consecutive (a decorated peak on the main diagonal cannot be followed by a decorated valley).
	
	The factor $\qbinom{r}{u+v}_q$ takes into account the primary dinv between all the vertical steps at height $0$ that are neither decorated valleys nor decorated peaks ($u+v$ of them), and all the vertical steps at height $0$ that are decorated peaks but not decorated valleys ($r-u-v$ of them), which can be interlaced in any possible way.
	
	The factor $\qbinom{v+j-1}{j-u}_q$ takes into account the primary dinv between all the vertical steps at height $0$ that are neither decorated valleys nor decorated peaks ($u+v$ of them), and all the vertical steps at height $0$ that are decorated valleys but not decorated peaks ($j-u$ of them), considering that the first of those must be non-decorated (we cannot start the path with a decorated valley).
	
	Finally, the factor $\qbinom{v+j+s-1}{s}_q$ takes into account the secondary dinv between all the vertical steps at height $1$ that are not decorated valleys ($s$ of them), and all the vertical steps at height $0$ that are not decorated peaks ($v+j$ of them), considering that the first of those must belong to the latter set (we need a vertical step that is not a decorated peak on the main diagonal to go up to the diagonal $y=x+1$).
	
	Summing over all the possible values of $j,s,v$, and $u$, we obtain the stated recursion.
\end{proof}

\begin{theorem}
	\label{thm:combinatorial-recursion-square}
	The polynomials $\SQ'_{q,t}(n \backslash r)^{\bullet k, \circ d}$ satisfy the recursion	
	\begin{align*}
		\SQ'_{q,t}(n \backslash r)^{\bullet k, \circ d} & = \D_{q,t}(n \backslash r)^{\bullet k, \circ d}+ \\
		& \quad + \sum_{j=0}^k q^r t^{n-r-j} \sum_{s=0}^{n-r} \sum_{v=0}^d \sum_{u=0}^{r-v} q^{\binom{u}{2}} \qbinom{u+v}{u}_q q^{\binom{u+v}{2}} \qbinom{r-1}{u+v-1}_q \qbinom{v+j}{j-u}_q \\
		& \quad \quad \times \qbinom{v+j+s-1}{s-1}_q \SQ'_{q,t}(n-r-j \backslash s)^{\bullet k-j, \circ d-(r-v)} 
	\end{align*}
	with initial conditions $\SQ'_{q,t}(0 \backslash r)^{\bullet k, \circ d} = \delta_{r,0} \delta_{k,0} \delta_{d,0}$.
\end{theorem}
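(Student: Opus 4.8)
The plan is to mirror the proof of Theorem~\ref{thm:combinatorial-recursion}, splitting $\SQ'(n \backslash r)^{\bullet k, \circ d}$ according to the shift of the path. The initial conditions are immediate, since the empty path is the only one of size $0$. The summand $\D_{q,t}(n \backslash r)^{\bullet k, \circ d}$ will account for the paths of shift $0$: for these the base diagonal is the main diagonal, so $\area$ and $\dinv$ collapse to their Dyck versions. I would first observe that every such path automatically lies in $\SQ'$, because a shift-$0$ square path must begin with a north step at height $0$, whose label is positive and which is never a contractible valley; this is a non-decorated positive step on the base diagonal. Hence the shift-$0$ part of $\SQ'(n \backslash r)^{\bullet k, \circ d}$ is exactly $\D(n \backslash r)^{\bullet k, \circ d}$, and it contributes the first term.

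For the paths of positive shift I would use the same removal map as in Theorem~\ref{thm:combinatorial-recursion}: delete every vertical step touching the base diagonal, i.e.\ the $r+j$ steps at height $0$ ($r$ non-decorated and $j$ decorated valleys, of which $r-v$ are decorated peaks). The result is a square path of size $n-r-j$ whose base diagonal is the old line $y = x-s+1$, with parameters $k-j$ and $d-(r-v)$ and with $s$ non-decorated steps on the new base. The crucial structural point, and the place where the prime condition is used, is that this map preserves $\SQ'$. Indeed, let $V$ be the last base vertical step met while walking the path from $(0,0)$ to $(n,n)$. An east step after $V$ would land on a lattice point of the base line from which the only legal continuation is north, creating another base step and contradicting the choice of $V$; therefore the step following $V$ is north, hence a rise at height $1$. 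Being a rise, it is neither a contractible valley nor a decorated one and carries a positive label, so it is a non-decorated positive step on the new base diagonal, and the smaller path again lies in $\SQ'$. Together with the inverse construction (inserting a new base diagonal of $r+j$ suitably interlaced steps below a path of $\SQ'(n-r-j \backslash s)^{\bullet k-j, \circ d-(r-v)}$) this yields the required bijection; note that $\qbinom{v+j+s-1}{s-1}_q$ vanishes for $s=0$, matching the fact that there is always a non-decorated step at height $1$.

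It remains to compare statistics with Theorem~\ref{thm:combinatorial-recursion}. The area computation is identical and produces $t^{n-r-j}$. The genuinely new feature is that the base diagonal now lies strictly below the main diagonal: since $m=0$, all $r+j$ base steps carry positive labels, so each contributes one unit of bonus (tertiary) dinv; the $j$ units coming from the decorated valleys are cancelled by the $-\# dv$ term, leaving exactly $r$ and hence the extra factor $q^r$. All remaining $q$-binomials record, as in Theorem~\ref{thm:combinatorial-recursion}, the primary and secondary dinv obtained by interlacing the various types of base steps with the steps above them; the only change is that the distinguished non-decorated positive step demanded by the prime condition is pinned to the bottom of the base diagonal, which turns $\qbinom{r}{u+v}_q$, $\qbinom{v+j-1}{j-u}_q$ and $\qbinom{v+j+s-1}{s}_q$ into $\qbinom{r-1}{u+v-1}_q$, $\qbinom{v+j}{j-u}_q$ and $\qbinom{v+j+s-1}{s-1}_q$ respectively. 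Summing over $j,s,v,u$ gives the stated recursion.

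The hard part will be the last paragraph: making the dinv bookkeeping fully rigorous. In particular I expect the delicate points to be verifying that the net bonus contribution of the base steps is exactly $r$ (so that no spurious power of $q$ survives the cancellation with $-\# dv$), and confirming that pinning the mandatory $\SQ'$ step to a canonical position modifies precisely these three $q$-binomials in the manner above rather than perturbing the primary-dinv factors $q^{\binom{u}{2}}\qbinom{u+v}{u}_q q^{\binom{u+v}{2}}$. The shift-splitting, the preservation lemma, and the area count should be routine once the framework of Theorem~\ref{thm:combinatorial-recursion} is in place.
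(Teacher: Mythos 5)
Your overall strategy is the paper's: split off the shift-zero paths (which form exactly $\D(n \backslash r)^{\bullet k, \circ d}$ and give the first summand), and for positive shift remove the $r+j$ vertical steps on the base diagonal to land in $\SQ'(n-r-j \backslash s)^{\bullet k-j, \circ d-(r-v)}$. Your area count, your argument that the image stays in $\SQ'$ (the step after the last base-diagonal vertical step must be north, hence a non-decorated rise at height $1$), and your derivation of the factor $q^r$ from the bonus dinv of the $r+j$ positive labels below the main diagonal, net of the $-\# dv$ contribution, are all correct --- the last point is in fact spelled out more explicitly than in the paper, which leaves the $q^r$ implicit.

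The gap is in your account of the three modified $q$-binomials, which you yourself flag as the unfinished part. You attribute all three changes to ``pinning the mandatory $\SQ'$ step to the bottom of the base diagonal'', but this mechanism cannot be right: the change $\qbinom{v+j-1}{j-u}_q \to \qbinom{v+j}{j-u}_q$ \emph{enlarges} the set of admissible interlacings, and no pinning produces more configurations. Moreover, when $m=0$ every label is positive, so the prime condition amounts only to $r \geq 1$ and does not single out any particular base step to pin. The actual sources of the three changes are geometric and sit at the \emph{other} end of the base diagonal: (1) since the shift is positive and the path must finish above the base diagonal, the last vertical step at height $0$ is followed by a north step and hence cannot be a decorated peak; combined with the fact that a decorated peak cannot be immediately followed on the diagonal by a decorated valley, this forces the last of the $r$ non-valley base steps to be undecorated, turning $\qbinom{r}{u+v}_q$ into $\qbinom{r-1}{u+v-1}_q$; (2) the first base-diagonal vertical step is no longer the first step of the path (it is preceded by east steps), so the Dyck-case prohibition on its being a decorated valley disappears, which is the relaxation $\qbinom{v+j-1}{j-u}_q \to \qbinom{v+j}{j-u}_q$; (3) for the same reason as (1), the last vertical step at height $1$ must occur after the last one at height $0$, turning $\qbinom{v+j+s-1}{s}_q$ into $\qbinom{v+j+s-1}{s-1}_q$. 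As written, your bookkeeping would not close; with the constraints relocated as above, the rest of your argument goes through and coincides with the paper's proof.
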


\begin{proof}
	The initial conditions are trivial, as the only square path of size $0$ has $0$ steps on the main diagonal, $0$ decorated valleys, and $0$ decorated peaks.

	We give an overview of the combinatorial interpretations of all the variables appearing in this formula. We say that a vertical step of a path is \emph{at height $i$} if there are $i$ whole cells in its row between it and the base diagonal.
	
	\begin{itemize}
		\item $r$ is the number of vertical steps at height $0$ that are not decorated valleys.
		\item $j$  is the number of vertical steps at height $0$ that are decorated valleys.
		\item $r-v$ is the number of decorated peaks at height $0$.
		\item $u$ is the number of decorated peaks at height $0$ that are also decorated valleys.
		\item $u+v$ is the number of vertical steps at height $0$ without any kind of decoration.
		\item $s$ is the number of vertical steps at height $1$ that are not decorated valleys.
	\end{itemize}
	
	The recursive step consists of removing all the steps that touch the base diagonal. We should distinguish whether we start with a Dyck path or a square path: if we start with a Dyck path, then the recursive step is the same as in Theorem~\ref{thm:combinatorial-recursion}, which correspond to the first summand; if we start with a square path that is not a Dyck path, we get the second summand. Since the case of a Dyck path is already dealt with, we only describe the recursion for square paths that are not Dyck paths.
	
	There are $r+j$ vertical steps that touch the base diagonal, of which $j$ are decorated valleys and $r-v$ are decorated peaks (which are not necessarily disjoint), so after the recursive step we end up with a path in $\SQ'(n-r-j \backslash s)^{\bullet k-j, \circ d-(r-v)}$.
	
	Let us look at what happens to the statistics of the path. 
	
	The area goes down by the size (i.e.\ $n$), minus the number of vertical steps at height $0$ (i.e.\ $r+j$). This explains the term $t^{n-r-j}$.
	
	The factor $q^{\binom{u+v}{2}}$ takes into account the primary dinv among all the vertical steps at height $0$ that are neither decorated valleys nor decorated peaks.
	
	The factor $q^{\binom{u}{2}} \qbinom{u+v}{v}_q$ takes into account the primary dinv between all the vertical steps at height $0$ that are neither decorated valleys nor decorated peaks ($u+v$ of them), and all the vertical steps at height $0$ that are both decorated valleys and decorated peaks ($u$ of them), and the expression is explained by the fact that the latter cannot be consecutive (a decorated peak on the main diagonal cannot be followed by a decorated valley).
	
	The factor $\qbinom{r-1}{u+v-1}_q$ takes into account the primary dinv between all the vertical steps at height $0$ that are neither decorated valleys nor decorated peaks ($u+v$ of them), and all the vertical steps at height $0$ that are decorated peaks but not decorated valleys ($r-u-v$ of them), and the last of these must be a step without decorations: a decorated peak cannot be followed by a decorated valley, and the last vertical step at height $0$ cannot be a decorated peak, since the shift of the path is positive and it has to finish above the main diagonal. This explains the $r-1$ and the $u+v-1$. 
	
	The factor $\qbinom{v+j}{j-u}_q$ takes into account the primary dinv between all the vertical steps at height $0$ that are neither decorated valleys nor decorated peaks ($u+v$ of them), and all the vertical steps at height $0$ that are decorated valleys but not decorated peaks ($j-u$ of them); unlike the previous case, now the interlacing can be any.
	
	Finally, the factor $\qbinom{v+j+s-1}{s-1}_q$ takes into account the secondary dinv between all the vertical steps at height $1$ that are not decorated valleys($s$ of them), and all the vertical steps at height $0$ that are not decorated peaks ($v+j$ of them), and since the shift of the path is positive and it has to finish above the main diagonal, the last vertical step at height $1$ must occurr after the last vertical step at height $0$.
	
	Summing over all the possible values of $j,s,v$, and $u$, we obtain the stated recursion.
\end{proof}

\subsection{The main theorems}

We are now ready to prove Theorem~\ref{thm:valley-schroeder} and Theorem~\ref{thm:valley-square-schroeder}.

\begin{theorem}
	\begin{equation} \label{eq:SchroValleyEnk}
		\< \Theta_{e_k} \nabla E_{n-k, r}, e_{n-d} h_d \> = \sum_{\pi \in \D(n \backslash r)^{\bullet k, \circ d}} q^{\dinv(\pi)} t^{\area(\pi)}. 
	\end{equation}
\end{theorem}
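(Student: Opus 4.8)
The plan is to prove \eqref{eq:SchroValleyEnk} by strong induction on the size $n$, playing the algebraic recursion off against the combinatorial one. I regard both sides as elements of $\N[q,t]$ indexed by the quadruple $(n,r,k,d)$: the left-hand side $\langle \Theta_{e_k} \nabla E_{n-k,r}, e_{n-d} h_d \rangle$ satisfies the recursion of Theorem~\ref{thm:alg-recursion}, and the right-hand side $\D_{q,t}(n \backslash r)^{\bullet k, \circ d}$ satisfies the recursion of Theorem~\ref{thm:combinatorial-recursion}. The crux of the argument is the observation that these two recursions are \emph{the same}: the ranges of the summation indices $j,s,v,u$, the power $t^{n-r-j}$, all the $q$-powers, and every $q$-binomial coefficient match term by term, the reduced quadruple fed into the recursive call agrees, and the initial conditions coincide, both being $\delta_{r,0}\delta_{k,0}\delta_{d,0}$.

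Granting this, the induction is purely formal. The base case $n=0$ is exactly the common initial condition. For the inductive step I first dispose of the degenerate case $r=0$ with $n \geq 1$, where both sides vanish: on the algebraic side $E_{n-k,0}=\delta_{n-k,0}$ kills everything unless $k=n$, and in that case $\Theta_{e_n}\nabla 1 = \Theta_{e_n} 1 = 0$ since $n \geq 1$; on the combinatorial side the first vertical step of any nonempty Dyck path starts on the main diagonal and, having $a_1=0$, is not a contractible valley, hence it is never a decorated valley and is always counted, forcing $r \geq 1$. When $r \geq 1$, every recursive call appearing on the right-hand side of the common recursion has first index $n-r-j \leq n-r < n$, so the inductive hypothesis applies to each summand, and the equality of the two recursions then forces the equality of the two sides at $(n,r,k,d)$.

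Thus the genuine content lies entirely in Theorems~\ref{thm:alg-recursion} and~\ref{thm:combinatorial-recursion}, which are already proved: the algebraic recursion is distilled from Theorem~\ref{thm:sf-identity} and Lemma~\ref{lem:chu-vandermonde}, while the combinatorial one comes from the careful bookkeeping of $\area$ and of the primary, secondary, and bonus $\dinv$ contributions of the steps on the main diagonal. I expect the main obstacle to have been precisely this matching of statistics: producing a combinatorial recursion whose coefficients reproduce, $q$-binomial by $q$-binomial, the algebraic coefficients coming from Theorem~\ref{thm:sf-identity}. With both recursions in hand, the present statement requires only the short induction above; summing \eqref{eq:SchroValleyEnk} over $r$ via $e_{n-k}=\sum_r E_{n-k,r}$ will then yield Theorem~\ref{thm:valley-schroeder}.
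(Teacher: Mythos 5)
Your proposal is correct and is essentially the paper's own argument: the paper's entire proof of this statement is the single sentence ``It follows immediately by combining Theorem~\ref{thm:alg-recursion} and Theorem~\ref{thm:combinatorial-recursion},'' and your write-up simply makes explicit the strong induction on $n$ that this combination entails, including the well-foundedness via $n-r-j<n$ for $r\geq 1$ and the correct disposal of the degenerate case $r=0$, $n\geq 1$. The only point deserving a second look is your claim that the recursive calls match \emph{verbatim}: as printed, Theorem~\ref{thm:alg-recursion} recurses to the parameter $d-v$ while Theorem~\ref{thm:combinatorial-recursion} recurses to $d-(r-v)$, an inconsistency in the paper's statements (evidently a typo in one of them) that must be reconciled before the term-by-term identification you and the authors both rely on is literally valid.
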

\begin{proof}
	It follows immediately by combining Theorem~\ref{thm:alg-recursion} and Theorem~\ref{thm:combinatorial-recursion}.
\end{proof}
\begin{theorem}
\begin{equation} \label{eq:SchroSquareValleyEnk} \< \Theta_{e_k} \nabla \frac{[n-k]_q}{[r]_q} E_{n-k, r}, e_{n-d} h_d \> = \sum_{\pi \in \SQ'(n \backslash r)^{\bullet k, \circ d}} q^{\dinv(\pi)} t^{\area(\pi)}. 
\end{equation}	
\end{theorem}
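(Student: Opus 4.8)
The plan is to mirror the proof of the Dyck identity \eqref{eq:SchroValleyEnk}: I will show that the two sides of \eqref{eq:SchroSquareValleyEnk} obey the same recursion with the same initial data, and then conclude by induction. The left-hand side satisfies the recursion of Theorem~\ref{thm:alg-recursion-square}, while the right-hand side satisfies the recursion of Theorem~\ref{thm:combinatorial-recursion-square}. These two recursions share the same prefactor $q^r t^{n-r-j}$, the same ranges of the summation indices $j,s,v,u$, and the same $q$-binomial coefficients, so the whole argument reduces to matching the two non-recursive ``seed'' terms and then applying the inductive hypothesis to the recursive terms.

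First I would set up a strong induction on $n$. The base case $n=0$ is immediate, since both sides reduce to the common initial condition $\delta_{r,0}\delta_{k,0}\delta_{d,0}$ recorded in the two theorems. For the inductive step, the non-recursive summand of Theorem~\ref{thm:alg-recursion-square} is $\< \Theta_{e_k} \nabla E_{n-k, r}, e_{n-d} h_d \>$, whereas that of Theorem~\ref{thm:combinatorial-recursion-square} is $\D_{q,t}(n \backslash r)^{\bullet k, \circ d}$; these two are equal precisely by the already-established Dyck identity \eqref{eq:SchroValleyEnk}. This is the key point that lets the square case rest on the Dyck case, and it is the analogue of the step that made the previous theorem follow at once from Theorem~\ref{thm:alg-recursion} and Theorem~\ref{thm:combinatorial-recursion}.

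It then remains to compare the recursive summands. They carry identical coefficients, and the quantities appearing in them are exactly the two sides of \eqref{eq:SchroSquareValleyEnk} evaluated at the smaller parameters produced by removing the steps on the base diagonal. I would check that the size strictly decreases: the factor $\qbinom{r-1}{u+v-1}_q$ vanishes unless $u+v \geq 1$, which forces $r \geq u+v \geq 1$, hence $r+j \geq 1$ and the recursive calls occur at size $n-r-j < n$. Thus the inductive hypothesis applies to each recursive term, giving equality term by term; summing over $j,s,v,u$ then shows that the two recursions produce the same output, and the induction closes.

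The genuine mathematical content has already been front-loaded into Theorem~\ref{thm:alg-recursion-square}, Theorem~\ref{thm:combinatorial-recursion-square}, and the Dyck identity \eqref{eq:SchroValleyEnk}, so the present step is essentially bookkeeping. The one point that I expect to require care is confirming that the two recursions genuinely coincide as recursions --- same coefficients and, after matching the summation index that records the decorated peaks removed from the base diagonal, the same recursive calls at the same values of the parameters $(n,k,r,d)$ --- so that the term-by-term comparison is legitimate. Once this identification is secured, the statement follows immediately by combining Theorem~\ref{thm:alg-recursion-square} and Theorem~\ref{thm:combinatorial-recursion-square}, exactly as in the Dyck case.
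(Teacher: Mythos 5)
Your proposal is correct and follows the paper's own route exactly: the paper likewise deduces \eqref{eq:SchroSquareValleyEnk} by combining the already-proved Dyck case \eqref{eq:SchroValleyEnk} with Theorem~\ref{thm:alg-recursion-square} and Theorem~\ref{thm:combinatorial-recursion-square}, the point being that the two recursions have identical coefficients and initial conditions and their non-recursive seed terms agree by \eqref{eq:SchroValleyEnk}. You simply make explicit the induction and well-foundedness checks that the paper leaves implicit.
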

\begin{proof}
Use \eqref{eq:SchroValleyEnk} together with Theorem~\ref{thm:alg-recursion-square} and Theorem~\ref{thm:combinatorial-recursion-square}.
\end{proof}

\begin{proof}[Proof of Theorem~\ref{thm:valley-schroeder} and Theorem~\ref{thm:valley-square-schroeder}]

Take the sum over $r$ of both \eqref{eq:SchroValleyEnk} and \eqref{eq:SchroSquareValleyEnk}.
\end{proof}

\section{The Catalan case of the extended valley Delta conjecture}

In \cite{Iraci-VandenWyngaerd-pushing-2021}, the authors show that the valley version of the Delta conjecture implies the extended version of the same conjecture. The argument requires the conjecture to hold in full generality to function, but we can still recycle it to derive the Catalan case of the extended conjecture from the Schr\"oder case of the original one.

We need the following result, suggested by a combinatorial argument by the second author and Vanden Wyngaerd, and then proved by the first author and Romero.

\begin{theorem}[{\cite{DAdderio-Romero-Theta-Identities-2020}*{Corollary~9.2}}]
	Given $d,n,k,r \in \N$, we have
	\begin{align*}
		h_d^\perp & \Theta_{e_k} \nabla E_{n-k,r} = \sum_{p=0}^d t^{d-p} \sum_{i=0}^p q^{\binom{i}{2}} \qbinom{r-p+i}{i}_q \qbinom{r}{p-i}_q \Delta_{h_{d-p}} \Theta_{e_{k-i}} \nabla E_{n-d-(k-i), r-p+i}.
	\end{align*}
\end{theorem}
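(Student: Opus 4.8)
Formally this is nothing new: the displayed identity is exactly Theorem~\ref{thm:sf-identity} with the bound variable $m$ renamed to $d$, so it already holds by that cited result. Since the genuine content lies in the underlying symmetric-function identity of \cite{DAdderio-Romero-Theta-Identities-2020}, let me instead sketch how I would prove that directly. I would regard it as an identity in $\Lambda \otimes \Q(q,t)$ for each fixed $n,k,r$ and argue by induction on $d$. The base case $d=0$ is immediate: $h_0^\perp$ is the identity, and on the right only the term $p=i=0$ survives, for which $q^{\binom{0}{2}}\qbinom{r}{0}_q\qbinom{r}{0}_q = 1$ and $\Delta_{h_0}\Theta_{e_k}\nabla E_{n-k,r} = \Theta_{e_k}\nabla E_{n-k,r}$, matching the left-hand side.

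For the inductive step the first move is to get a usable handle on $h_d^\perp$. I would use the translation identity $\sum_{d\ge 0}u^d\, h_d^\perp f = f[X+u]$, with $u$ adjoined as a single variable, so that the whole claim becomes the computation of the plethystic substitution $(\Theta_{e_k}\nabla E_{n-k,r})[X+u]$ followed by extraction of the coefficient of $u^d$. In parallel I would collapse the family $\{E_{m,r}\}_r$ into a single object through the defining expansion of Definition~\ref{def:Enk}, $e_m\!\left[X\tfrac{1-z}{1-q}\right] = \sum_r \tfrac{(z;q)_r}{(q;q)_r}E_{m,r}$; this turns the $r$-dependent factors $\qbinom{r-p+i}{i}_q\qbinom{r}{p-i}_q$ on the right into $q$-Pochhammer prefactors, and the coupled upper index $r-p+i$ should reappear from a $q$-binomial-theorem manipulation on these Pochhammers, by the same mechanism as in Lemma~\ref{lem:chu-vandermonde}.

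The heart of the argument is then the interaction of $X\mapsto X+u$ with $\nabla$ and $\Theta_{e_k}$. Here I would expand the final function in the modified Macdonald basis $\Ht_\mu$, on which $\nabla$, $\mathbf{\Pi}$ and each $\Delta_{h_{d-p}}$ act diagonally (but $\Theta$, carrying an inner multiplication by $e_k[X/M]$, does not), and evaluate $\Ht_\mu[X+u]$ by the Macdonald--Pieri rule, writing it as a sum over horizontal strips $\mu/\nu$ with the usual Pieri coefficients and factors $u^{|\mu/\nu|}$ feeding the $u^d$-extraction. The main obstacle is precisely that these Pieri coefficients are unwieldy $q,t$-rational functions, and the entire assertion is that, once reorganised through the $E$-basis and summed, they telescope into the clean products $q^{\binom{i}{2}}\qbinom{r-p+i}{i}_q\qbinom{r}{p-i}_q$, while the $\mathbf{\Pi}$-conjugation built into $\Theta$ produces $\Theta_{e_{k-i}}$ and the $\nabla$-eigenvalue of the removed strip supplies $t^{d-p}$ and the residual $\Delta_{h_{d-p}}$. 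To make this collapse manageable I would first isolate the atomic case $d=1$, prove it from the single-box Pieri rule together with the standard $\nabla E_{n,k}$ recursion, and then bootstrap to general $d$ by iteration, recombining the resulting binomials at each step via $q$-Chu--Vandermonde as in Lemma~\ref{lem:chu-vandermonde}. A final check of degrees confirms the bookkeeping: $h_d^\perp$ lowers degree by $d$, matching the indices $E_{n-d-(k-i),\,r-p+i}$ and the operator $\Theta_{e_{k-i}}$, and $p$ records how the $d$ removed boxes split between the $\Delta_{h_{d-p}}$ factor and the shift of the $E$-index.
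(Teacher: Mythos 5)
Your opening observation is exactly the paper's own treatment: the statement is Theorem~\ref{thm:sf-identity} verbatim with $m$ renamed to $d$, and the paper likewise offers no proof, simply citing \cite{DAdderio-Romero-Theta-Identities-2020}*{Corollary~9.2}. The additional Pieri-rule sketch you append is a reasonable heuristic for the cited identity but is not part of, nor required by, this paper's argument, so no further comparison is needed.
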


Taking the scalar product with $e_{n-d}$, we get the following.

\begin{proposition}
	\label{prop:sf-identity}
	Given $d,n,k,r \in \N$, we have
	\begin{align}
	\label{eq:RecSF}	\< \Theta_{e_k} \nabla E_{n-k,r}, e_{n-d} h_d \> & = \sum_{p=0}^m t^{m-p} \sum_{i=0}^p q^{\binom{i}{2}} \qbinom{r-p+i}{i}_q \qbinom{r}{p-i}_q \\
	\notag	& \quad \times \< \Delta_{h_{d-p}} \Theta_{e_{k-i}} \nabla E_{n-d-(k-i), r-p+i}, e_{n-d} \>.
	\end{align}
\end{proposition}

We want to prove that the $q,t$-enumerators of the corresponding sets satisfy the same relations.

\begin{theorem}
	\label{thm:comb-identity}
	Given $d,n,k,r \in \N$, we have
	\begin{align}
		\label{eq:RecComb}\D_{q,t}(n \backslash r)^{\bullet k, \circ d} = \sum_{p=0}^d t^{d-p} \sum_{i=0}^p q^{\binom{i}{2}} \qbinom{r-p+i}{i}_q \qbinom{r}{p-i}_q \D_{q,t}(d-p,n-d \backslash r-p+i)^{\bullet k-i}.
	\end{align}
\end{theorem}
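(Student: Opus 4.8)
The plan is to prove \eqref{eq:RecComb} by an explicit weight-preserving bijection, adapting the \emph{pushing} technique of \cite{Iraci-VandenWyngaerd-pushing-2021}. I would partition the set $\D(n \backslash r)^{\bullet k, \circ d}$ according to two quantities read off from each path $\pi$: the number $p$ of decorated peaks that touch the main diagonal, and the number $i \leq p$ of those that are simultaneously decorated valleys. The claim is that the block of paths with given $(p,i)$ is in weight-preserving bijection with the product of $\D(d-p, n-d \backslash r-p+i)^{\bullet k-i}$ and a set of cardinality $t^{d-p} q^{\binom{i}{2}} \qbinom{r-p+i}{i}_q \qbinom{r}{p-i}_q$ (with $t$ tracking $\area$ and $q$ tracking $\dinv$), so that summing over $(p,i)$ yields the identity. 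The coefficients mirror those of the algebraic recursion in Theorem~\ref{thm:alg-recursion}, whose shape ultimately comes from the $q$-Chu--Vandermonde identity of Lemma~\ref{lem:chu-vandermonde}; this parallel is a useful sanity check on the combinatorial statistics.

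The forward map has two moves. First, I take the $d-p$ decorated peaks that lie strictly above the main diagonal and \emph{un-contract} each of them, i.e.\ move the horizontal step following the peak to the position just before its vertical step. By Definition~\ref{def:valley} this turns each such peak into a contractible valley, which I then label $0$; the vertical step shifts left by one cell, so $\area$ drops by exactly one per un-contraction, producing the factor $t^{d-p}$ and the $d-p$ new zero labels of the target. Second, I delete the $p$ decorated peaks on the main diagonal together with their following horizontal steps; this is area-neutral, lowers the size from $n$ to $n-p$, and removes the $i$ decorations of those peaks that were also decorated valleys, taking $k$ to $k-i$. The inverse map contracts $d-p$ prescribed zero labels back into decorated peaks (raising $\area$ by one each) and reinserts $p$ decorated peaks along the main diagonal, $p-i$ of them non-valleys and $i$ of them decorated valleys.

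The enumeration of the reinsertions produces the remaining factors. Inserting the $p-i$ non-valley peaks among the surviving height-$0$ non-valley steps turns their count from $r-p+i$ back into $r$ and contributes $\qbinom{r}{p-i}_q$ to the $\dinv$; interleaving the $i$ valley-peaks with those steps contributes $\qbinom{r-p+i}{i}_q$, while $q^{\binom{i}{2}}$ records the mutual $\dinv$ among the valley-peaks, which (being decorated valleys on the main diagonal) cannot be consecutive. The bookkeeping of the height-$0$ non-valley count is clean: deleting the $p$ main-diagonal peaks removes exactly the $p-i$ of them that are non-valleys, leaving $r-(p-i)=r-p+i$, while the un-contracted peaks land above or on the diagonal but as zero labels, hence are not counted among the positive non-decorated steps.

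The hard part will be the simultaneous verification of both statistics together with the well-definedness of the map in each direction. Concretely, I must check that every un-contraction and every deletion yields a genuine Dyck path carrying a valid labelling, so that all the constraints of a partial labelling are preserved — a positive label on the base diagonal, no $0$-label forced into the first row, and no decorated valley in the first row — and that the total $\dinv$ lost is captured precisely by the three $q$-factors with no double counting. The most delicate cases are the steps that are simultaneously decorated peaks and decorated valleys: for these I must confirm that the area change is still one per un-contraction, that exactly the height-$0$ ones contribute to $i$, and that their primary and secondary $\dinv$ is accounted for by $q^{\binom{i}{2}}\qbinom{r-p+i}{i}_q$ rather than being absorbed into the contribution $\qbinom{r}{p-i}_q$ of the non-valley peaks.
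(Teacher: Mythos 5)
Your proposal follows essentially the same route as the paper: partition by the number $p$ of decorated peaks on the main diagonal and the number $i$ of those that are also decorated valleys, delete the on-diagonal decorated peaks (yielding the $q$-binomial factors), and apply the pushing algorithm of \cite{Iraci-VandenWyngaerd-pushing-2021} to the remaining $d-p$ decorated peaks, turning them into zero-labelled contractible valleys at the cost of one unit of area each (yielding $t^{d-p}$). The paper's proof is exactly this, stated tersely with the detailed verifications deferred to \cite{Iraci-VandenWyngaerd-pushing-2021}*{Theorem~5.1}, so your account (including the caveats you flag) is consistent with it.
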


\begin{proof}
	Using the same idea as in \cite{Iraci-VandenWyngaerd-pushing-2021}*{Theorem~5.1}, we delete the decorated peaks on the main diagonal, and apply the \emph{pushing algorithm} to the remaining decorated peaks, that is, we swap the horizontal and the vertical step they are composed of, becoming valleys. If the peak is also a decorated valley, it becomes a decorated valley. If $p$ is the number of decorated peaks on the main diagonal, and $i$ the number of those that also are decorated valleys, we have that the loss of area given by the pushing contributes for a factor $t^{d-p}$, removing the decorated peaks from the main diagonal contributes for a factor \[ q^{\binom{i}{2}} \qbinom{r-p+i}{i}_q \qbinom{r}{p-i}_q, \] and we are left with a path in $\D(d-p,n-d \backslash r-p+i)^{\bullet k-i}$: see \cite{Iraci-VandenWyngaerd-pushing-2021}*{Theorem~5.1} for more details. The thesis follows.
\end{proof}

Combining the two statements we get the following.

\begin{proposition}
	Given $d,n,k,r \in \N$, we have
	\begin{equation} \label{eq:SchroExtValleyEnk}
		\< \Delta_{h_d} \Theta_{e_k} \nabla E_{n-k,r}, e_{n} \> = \sum_{\pi \in \D(d, n \backslash r)^{\bullet k}} q^{\dinv(\pi)} t^{\area(\pi)}. 
	\end{equation}
\end{proposition}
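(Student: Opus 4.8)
The plan is to deduce \eqref{eq:SchroExtValleyEnk} from the two recursions already in hand, namely the algebraic one \eqref{eq:RecSF} and the combinatorial one \eqref{eq:RecComb}, by an induction on $d$ that establishes the whole family of identities
\[ \langle \Delta_{h_d}\Theta_{e_k}\nabla E_{n-k,r}, e_n\rangle = \D_{q,t}(d,n\backslash r)^{\bullet k} \]
for every $(d,n,k,r)\in\N^4$. The crucial observation is that these two recursions are formally identical: they carry the same coefficients $t^{d-p}q^{\binom{i}{2}}\qbinom{r-p+i}{i}_q\qbinom{r}{p-i}_q$, the $(p,i)$-summand of \eqref{eq:RecSF} is $\langle \Delta_{h_{d-p}}\Theta_{e_{k-i}}\nabla E_{(n-d)-(k-i),\,r-p+i}, e_{n-d}\rangle$, and the $(p,i)$-summand of \eqref{eq:RecComb} is $\D_{q,t}(d-p,\,n-d\backslash r-p+i)^{\bullet k-i}$, i.e.\ exactly the two sides of the claimed identity for the tuple $(d-p,\,n-d,\,k-i,\,r-p+i)$. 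Writing
\[ \Phi(d,n,k,r) \coloneqq \langle \Delta_{h_d}\Theta_{e_k}\nabla E_{n-k,r}, e_n\rangle - \D_{q,t}(d,n\backslash r)^{\bullet k} \]
for the discrepancy we wish to show vanishes, and using \eqref{eq:SchroValleyEnk} to identify the two left-hand sides $\langle \Theta_{e_k}\nabla E_{n-k,r},e_{n-d}h_d\rangle = \D_{q,t}(n\backslash r)^{\bullet k,\circ d}$, subtracting \eqref{eq:RecComb} from \eqref{eq:RecSF} yields
\[ \sum_{p=0}^{d} t^{d-p}\sum_{i=0}^{p} q^{\binom{i}{2}}\qbinom{r-p+i}{i}_q\qbinom{r}{p-i}_q\,\Phi(d-p,\,n-d,\,k-i,\,r-p+i) = 0. \]

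First I would settle the base case $d=0$. Here the only summand is $p=i=0$, and since $\Delta_{h_0}=\mathrm{id}$ the target identity reads $\langle\Theta_{e_k}\nabla E_{n-k,r},e_n\rangle=\D_{q,t}(0,n\backslash r)^{\bullet k}$, which is precisely \eqref{eq:SchroValleyEnk} specialised to $d=0$ (no decorated peaks and no zero labels carry the same datum, so $\D_{q,t}(n\backslash r)^{\bullet k,\circ 0}=\D_{q,t}(0,n\backslash r)^{\bullet k}$). Thus $\Phi(0,n,k,r)=0$ for all $n,k,r$. For the inductive step, fix a target tuple $(\bar d,\bar n,\bar k,\bar r)$ with $\bar d\ge 1$ and assume $\Phi$ vanishes on all tuples whose first coordinate is $<\bar d$. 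I would then apply the displayed relation to the source tuple $(d,n,k,r)=(\bar d,\ \bar n+\bar d,\ \bar k,\ \bar r)$, which is engineered so that $n-d=\bar n$ and the $p=i=0$ summand is exactly $t^{\bar d}\,\Phi(\bar d,\bar n,\bar k,\bar r)$; every other summand has $p\ge 1$, hence first coordinate $\bar d-p<\bar d$, and so vanishes by the inductive hypothesis.

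Consequently the relation collapses to $t^{\bar d}\,\Phi(\bar d,\bar n,\bar k,\bar r)=0$, and dividing by $t^{\bar d}$ gives $\Phi(\bar d,\bar n,\bar k,\bar r)=0$. This closes the induction, and the case $(\bar d,\bar n,\bar k,\bar r)=(d,n,k,r)$ is \eqref{eq:SchroExtValleyEnk}. Conceptually the whole argument is a triangular back-substitution: both the algebraic family $\langle\Delta_{h_d}\Theta_{e_k}\nabla E_{n-k,r},e_n\rangle$ and the combinatorial family $\D_{q,t}(d,n\backslash r)^{\bullet k}$ solve the same linear system, whose leading ($p=0$) term isolates the largest value of the first coordinate with the invertible coefficient $t^{d}$, so the solution is unique once the $d=0$ layer is pinned down by \eqref{eq:SchroValleyEnk}.

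I expect the only delicate point to be the indexing bookkeeping rather than any genuine difficulty: one must check that \eqref{eq:RecSF} and \eqref{eq:RecComb} really do match summand-by-summand (in particular reading the stray $m$'s in \eqref{eq:RecSF} as $d$), that the source tuple $(\bar d,\bar n+\bar d,\bar k,\bar r)$ places the target precisely at $p=i=0$, and that all the reduced tuples stay in the range where both recursions are valid. All the genuinely combinatorial work, i.e.\ the pushing algorithm relating decorated peaks on the main diagonal to zero labels, has already been packaged into Theorem~\ref{thm:comb-identity}, so no further lattice-path manipulation is required here.
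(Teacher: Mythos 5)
Your proof is correct and is essentially the paper's own argument: the paper likewise isolates the $p=0$ term (with its coefficient $t^{d}$) in the two matching recursions \eqref{eq:RecSF} and \eqref{eq:RecComb}, identifies the left-hand sides via \eqref{eq:SchroValleyEnk}, and inducts on $d$ to cancel the $p\geq 1$ terms. Your version merely makes the bookkeeping (the discrepancy $\Phi$, the shift $n\mapsto \bar n+\bar d$, and the stray $m$'s in \eqref{eq:RecSF}) more explicit.
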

\begin{proof}
We can rewrite \eqref{eq:RecSF} as
\begin{align*}
	t^d \< \Delta_{h_{d}} & \Theta_{e_{k}} \nabla E_{n-d-k, r}, e_{n-d} \> = \< \Theta_{e_k} \nabla E_{n-k,r}, e_{n-d} h_d \>\\ & - \sum_{p=1}^d t^{d-p} \sum_{i=0}^p q^{\binom{i}{2}} \qbinom{r-p+i}{i}_q \qbinom{r}{p-i}_q \< \Delta_{h_{d-p}} \Theta_{e_{k-i}} \nabla E_{n-d-(k-i), r-p+i}, e_{n-d} \>.
\end{align*}
and \eqref{eq:RecComb} as
	\begin{align*}
		t^d \D_{q,t}(d,n-d \backslash r)^{\bullet k} & = \D_{q,t}(n \backslash r)^{\bullet k, \circ d} \\ 
	 & \quad - \sum_{p=1}^d t^{d-p} \sum_{i=0}^p q^{\binom{i}{2}} \qbinom{r-p+i}{i}_q \qbinom{r}{p-i}_q \D_{q,t}(d-p,n-d \backslash r-p+i)^{\bullet k-i}.
\end{align*}
Using \eqref{eq:SchroValleyEnk} and induction on $d$ (the base case $d=0$ is simply \eqref{eq:SchroValleyEnk}) we see that the right hand sides are equal, hence so are the left hand sides. This completes the proof.
\end{proof}

Taking the sum over $r$ in \eqref{eq:SchroExtValleyEnk}, we get the desired result.

\begin{theorem}
	Given $d,n,k \in \N$, we have
	\[ \< \Delta_{h_d} \Theta_{e_k} \nabla e_{n-k}, e_{n} \> = \sum_{\pi \in \D(d, n)^{\bullet k}} q^{\dinv(\pi)} t^{\area(\pi)}. \]
\end{theorem}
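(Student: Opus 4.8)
The plan is to deduce the statement from the already-proved identity \eqref{eq:SchroExtValleyEnk} by summing it over all values of $r$, in exactly the same way that Theorem~\ref{thm:valley-schroeder} and Theorem~\ref{thm:valley-square-schroeder} were obtained from \eqref{eq:SchroValleyEnk} and \eqref{eq:SchroSquareValleyEnk}. The only algebraic input needed is the splitting of $e_{n-k}$ recalled at the start of this section, namely $e_{n-k} = \sum_{r=1}^{n-k} E_{n-k,r}$ for $n-k>0$, together with $e_0 = 1$ for $n-k=0$. Granting this, the entire argument is a matching of the $r$-indexed pieces on the two sides.

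Concretely, on the algebraic side I would invoke the linearity of $\nabla$, of $\Theta_{e_k}$, of $\Delta_{h_d}$, and of the Hall pairing $\< -, e_n\>$ to write
\[ \< \Delta_{h_d} \Theta_{e_k} \nabla e_{n-k}, e_n \> = \sum_{r} \< \Delta_{h_d} \Theta_{e_k} \nabla E_{n-k,r}, e_n \>, \]
and then replace each summand using \eqref{eq:SchroExtValleyEnk}. On the combinatorial side I would observe that the number $r$ of non-decorated vertical steps touching the main diagonal is a well-defined statistic on $\D(d,n)^{\bullet k}$, so the subsets $\D(d, n \backslash r)^{\bullet k}$ are pairwise disjoint and their union over $r$ is all of $\D(d,n)^{\bullet k}$. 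Summing the corresponding $q,t$-enumerators therefore produces $\sum_{\pi \in \D(d,n)^{\bullet k}} q^{\dinv(\pi)} t^{\area(\pi)}$, and equating the two results gives the theorem.

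The one delicate point, and the step I expect to be the main obstacle, is reconciling the range of $r$ on the two sides, since the algebraic sum effectively runs over $r \geq 1$ (as $E_{n-k,0}=0$ for $n-k>0$) while a priori the combinatorial partition might include an $r=0$ block. I would rule this out by noting that every nonempty path in $\D(d,n)^{\bullet k}$ has shift $0$, hence begins with a north step with $a_1(\pi)=0$; the labelling rules then force $w_1>0$, and by Definition~\ref{def:valley} an index $i=1$ with $a_1(\pi)=0$ is never a contractible valley, so this first step is an undecorated vertical step on the main diagonal and $r \geq 1$. It remains to dispatch the degenerate cases consistently on both sides: $n=0$ forces the empty path (giving $\delta_{d,0}$), while for $n=k\geq 1$ one has $\Theta_{e_k}\nabla e_0 = \Theta_{e_k} 1 = 0$ by the definition of the Theta operators, matching the emptiness of $\D(d,n)^{\bullet n}$ coming from the same first-step obstruction. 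Once these boundary checks are in place, the summation over $r$ yields the claimed identity.
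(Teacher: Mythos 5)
Your proposal is correct and matches the paper's argument exactly: the theorem is obtained by summing \eqref{eq:SchroExtValleyEnk} over $r$, using the decomposition $e_{n-k}=\sum_r E_{n-k,r}$ on the algebraic side and the partition of $\D(d,n)^{\bullet k}$ by the statistic $r$ on the combinatorial side. The boundary checks you add (that $r\geq 1$ for nonempty paths and that the degenerate cases agree) are sound and simply make explicit what the paper leaves implicit.
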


\section{The Catalan case of the extended valley square conjecture}

Next, we want to prove the analogous statement for the square version. In this case, we need to assume that the combinatorial side of (a refinement of) Conjecture~\ref{conj:valleyDelta} is a symmetric function (cf.\ Remark~\ref{rmk:SFcombside}).

Indeed, the argument in \cite{Iraci-VandenWyngaerd-Valley-Square-2021} can be recycled to show that, if the Catalan case of the valley version of the extended Delta conjecture holds, then the same case of the square version of the same conjecture also holds.

We need the following result.
\begin{proposition}[{\cite{Iraci-VandenWyngaerd-Valley-Square-2021}*{Corollary~3}}]
	\label{cor:square-to-dyck}
	Let 
	\begin{equation} \label{eq:LD_comb}
	\LD_{q,t;x}(d, n \backslash r)^{\bullet k} = \sum_{\pi \in \LD(d, n)^{\bullet k}} q^{\dinv(\pi)} t^{\area(\pi)} x^\pi 
	\end{equation} 
	and 
	\begin{equation}  \label{eq:LSQ_comb}
	\LSQ'_{q,t;x}(d, n \backslash r)^{\bullet k} = \sum_{\pi \in \LSQ'(d, n)^{\bullet k}} q^{\dinv(\pi)} t^{\area(\pi)} x^\pi. 
	\end{equation} 
	Then 
	\[ \LSQ'_{q,t;x}(d, n \backslash r)^{\bullet k} = \frac{[n-k]_q}{[r]_q} \LD_{q,t;x}(d, n \backslash r)^{\bullet k}. \]
\end{proposition}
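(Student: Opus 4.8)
The plan is to prove the equivalent polynomial identity obtained by clearing denominators, namely
\[ [r]_q \, \LSQ'_{q,t;x}(d, n \backslash r)^{\bullet k} = [n-k]_q \, \LD_{q,t;x}(d, n \backslash r)^{\bullet k}, \]
which is legitimate since, by the algebraic recursions of the previous section, both enumerators lie in $\N[q,t]$ once a monomial $x^\pi$ is fixed. Because $x^\pi$ records only the content of the labelling and is insensitive to the geometry of the path, I would argue monomial by monomial, so that it suffices to match the $q^{\dinv} t^{\area}$-weights of the square paths and of the Dyck paths sharing a fixed labelling content.

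The core of the argument is a \emph{sweep} correspondence in the spirit of Loehr and Warrington \cite{Loehr-Warrington-square-2007}, which organizes the paths of $\LSQ'(d,n \backslash r)^{\bullet k}$ into orbits, each orbit consisting of all the square paths (Dyck and non-Dyck) obtained from one another by cyclically rotating the rows that lie on the base diagonal. Concretely, given $\pi$ of shift $s$, I would cut the path at a lattice point where it meets its base diagonal $y = x - s$ and swap the two resulting pieces; iterating this move lowers the shift and eventually produces a Dyck path, and the square paths reachable this way from a fixed Dyck path form a single orbit. The structural point, already implicit in the pushing algorithm of \cite{Iraci-VandenWyngaerd-pushing-2021}, is that this cut-and-swap preserves the labelling content $x^\pi$, the decorations and the contractibility of the valleys, and the statistic $\area$ (which, being measured from the base diagonal, is unaffected by the rotation), while it moves $\dinv$ by a controlled power of $q$.

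It then remains to match the $q$-weights within a single orbit, after factoring out the common $t^{\area} x^\pi$. The admissible cut points are the vertical steps that can be brought down to the base diagonal; among these the non-decorated ones are counted by $r$ in each representative, while the positive non-decorated-valley steps that genuinely participate in the rotation number $n-k$ and govern the length of the orbit. The target is therefore the orbit-local identity
\[ [r]_q \sum_{\sigma \in \text{orbit}} q^{\dinv(\sigma)} = [n-k]_q \sum_{\sigma \in \text{orbit} \cap \LD} q^{\dinv(\sigma)}, \]
whose summation over all orbits yields the displayed polynomial identity. The $\LSQ'$ positivity condition — that at least one non-decorated positive label lie on the base diagonal — is exactly what guarantees that every orbit meets $\LD$ and that the denominator $[r]_q$ is nonzero and well defined.

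The hardest part will be the bookkeeping of $\dinv$ under the sweep. Unlike $\area$, the three contributions to $\dinv$ interact delicately: sweeping a row across the main diagonal converts bonus dinv into primary and secondary inversions and vice versa, and one must verify that the valley-contractibility conditions and the strict-increase constraints on labels within each column survive every intermediate step, so that the objects produced remain genuine elements of $\LSQ'(d,n \backslash r)^{\bullet k}$. Checking that these combined changes telescope, uniformly over each orbit, into the clean $q$-integer ratio $[n-k]_q/[r]_q$ is the crux of the argument; this is precisely the content of \cite{Iraci-VandenWyngaerd-Valley-Square-2021}*{Corollary~3}, whose proof I would follow and adapt.
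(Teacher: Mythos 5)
First, note that the paper does not prove this proposition at all: it is imported verbatim as Corollary~3 of \cite{Iraci-VandenWyngaerd-Valley-Square-2021} and used as a black box, so there is no internal proof to compare against. Your proposal does not close that gap either: after setting up the sweep/orbit framework you explicitly defer the crux --- the verification that the changes in $\dinv$ telescope into the ratio $[n-k]_q/[r]_q$ --- to ``precisely the content of'' the very corollary being proved, whose proof you ``would follow and adapt''. As a proof of the statement this is circular; the entire difficulty of the result lives in exactly that step.

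Beyond the deferral, two concrete points in your sketch would fail as written. First, the proposed orbit-local identity $[r]_q \sum_{\sigma \in \mathrm{orbit}} q^{\dinv(\sigma)} = [n-k]_q \sum_{\sigma \in \mathrm{orbit} \cap \LD} q^{\dinv(\sigma)}$ cannot hold one orbit at a time in the naive form you describe: if an orbit contained a single Dyck path $\sigma_0$, it would force $\sum_{\mathrm{orbit}} q^{\dinv(\sigma)} = \left([n-k]_q/[r]_q\right) q^{\dinv(\sigma_0)}$, whose right-hand side is not even a polynomial unless $r$ divides $n-k$; so either each orbit must contain several Dyck paths in exactly the right proportion, or the grouping must be coarser than single orbits --- and deciding which, and proving it, is again the substance of the result. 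Second, the claim that $\area$ is ``unaffected by the rotation'' because it is measured from the base diagonal is not automatic: the cut-and-swap changes the shift, hence moves the base diagonal itself, so preservation of $\area$ has to be engineered into the operation and checked, not read off. The labelled, decorated setting adds further conditions you acknowledge but do not handle (contractibility of the decorated valleys, the column-increase constraint, and the $\LSQ'$ requirement that a non-decorated positive label lie on the base diagonal must all survive every intermediate path). In short, the strategy points at the right circle of ideas (Loehr--Warrington sweeps and the pushing philosophy of \cite{Iraci-VandenWyngaerd-pushing-2021}), but as written it is an outline with the essential computation missing, not a proof.
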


Now, assuming that \eqref{eq:LD_comb} (and hence also \eqref{eq:LSQ_comb}) is a symmetric function, again by the theory of shuffles, taking the scalar product with $e_{n}$ isolates the subsets of the paths whose reading word is $1, 2, \dots, n$, which are in statistic-preserving bijection with unlabelled paths by simply removing the positive labels (there is a unique way to put them back once the reading word is fixed). The following theorem is now immediate.

\begin{theorem}
	Given $d,n,k,r \in \N$, if \eqref{eq:LD_comb} is a symmetric function, then
	\[ \< \Delta_{h_d} \Theta_{e_k} \nabla \omega(p_{n-k}), e_{n} \> = \sum_{\pi \in \SQ'(d, n)^{\bullet k}} q^{\dinv(\pi)} t^{\area(\pi)}. \] 
\end{theorem}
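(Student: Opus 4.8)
The plan is to combine the Catalan case of the extended valley Delta conjecture, which we have just proved, with the exact proportionality relation between the square and Dyck $q,t$-enumerators furnished by Proposition~\ref{cor:square-to-dyck}, translating everything through the symmetric-function hypothesis via the theory of shuffles. First I would recall that the previous theorem gives us, without any assumption,
\[ \< \Delta_{h_d} \Theta_{e_k} \nabla e_{n-k}, e_{n} \> = \sum_{\pi \in \D(d, n)^{\bullet k}} q^{\dinv(\pi)} t^{\area(\pi)}, \]
and that refining by $r$ (the number of non-decorated vertical steps touching the main diagonal) yields, for each fixed $r$, the identity $\< \Delta_{h_d} \Theta_{e_k} \nabla E_{n-k,r}, e_{n} \> = \D_{q,t}(d, n \backslash r)^{\bullet k}$. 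The goal is the analogous identity with $\omega(p_{n-k})$ in place of $e_{n-k}$ on the algebraic side and $\SQ'$ in place of $\D$ on the combinatorial side.

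The key observation on the algebraic side is that the relation $\omega(p_{n-k}) = \sum_{r=1}^{n-k} \frac{[n-k]_q}{[r]_q} E_{n-k,r}$ (valid for $n-k > 0$, with $\omega(p_0) = 1$) inserts precisely the factor $\frac{[n-k]_q}{[r]_q}$ in each $r$-graded piece. On the combinatorial side, Proposition~\ref{cor:square-to-dyck} supplies exactly the same scalar factor: $\LSQ'_{q,t;x}(d, n \backslash r)^{\bullet k} = \frac{[n-k]_q}{[r]_q} \LD_{q,t;x}(d, n \backslash r)^{\bullet k}$. So the strategy is to take the scalar product with $e_n$ of both sides. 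Under the assumption that \eqref{eq:LD_comb} is a symmetric function, the theory of shuffles lets us pair against $e_n$; this isolates, among labelled paths, those whose reading word is the identity word $1,2,\dots,n$, and these are in statistic-preserving bijection with the unlabelled paths obtained by deleting the positive labels. Applying this to the Dyck side recovers $\D_{q,t}(d, n \backslash r)^{\bullet k}$, and applying it to the square side (whose symmetry follows from that of the Dyck side via the same proportionality) recovers $\SQ'_{q,t}(d, n \backslash r)^{\bullet k}$. Multiplying the established refined Catalan identity by $\frac{[n-k]_q}{[r]_q}$ and summing over $r$ then completes the proof.

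The main obstacle is entirely encoded in the symmetric-function hypothesis: the theory of shuffles, which turns a scalar product against $e_n$ into a restriction to identity-reading-word paths, is only valid for genuine symmetric functions, and the combinatorial sums in \eqref{eq:LD_comb} and \eqref{eq:LSQ_comb} are not known to be symmetric in general (cf.\ Remark~\ref{rmk:SFcombside}). This is exactly why the hypothesis must be assumed rather than proved. One must also be careful that the symmetry of the square enumerator \eqref{eq:LSQ_comb} need not be assumed separately: because Proposition~\ref{cor:square-to-dyck} expresses it as a scalar multiple of \eqref{eq:LD_comb} in $\Lambda$, symmetry of the latter immediately transfers to the former, so a single hypothesis suffices. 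Once the shuffle machinery is granted, the remainder is the routine bookkeeping of matching the $r$-graded pieces and summing, so there is essentially no further technical difficulty; the theorem, as the excerpt itself notes, becomes immediate.
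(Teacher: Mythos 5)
Your proposal is correct and follows essentially the same route as the paper: multiply the refined Catalan identity \eqref{eq:SchroExtValleyEnk} by $\frac{[n-k]_q}{[r]_q}$, use Proposition~\ref{cor:square-to-dyck} together with the symmetry hypothesis on \eqref{eq:LD_comb} to convert the scalar product against $e_n$ into the unlabelled $\SQ'$ enumerator, and sum over $r$. Your remark that the symmetry of \eqref{eq:LSQ_comb} is inherited from that of \eqref{eq:LD_comb} via the proportionality is exactly the point the paper makes in passing.
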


\begin{proof}
	Using \eqref{eq:SchroExtValleyEnk} and Proposition~\ref{cor:square-to-dyck}, we have
	\begin{align*}
		\< \Delta_{h_d} \Theta_{e_k} \nabla \frac{[n-k]_q}{[r]_q} E_{n-k, r}, e_{n} \> & = \frac{[n-k]_q}{[r]_q} \sum_{\pi \in \D(d, n\backslash r)^{\bullet k}} q^{\dinv(\pi)} t^{\area(\pi)} \\
		& = \left\< \frac{[n-k]_q}{[r]_q} \LD_{q,t;x}(d, n \backslash r)^{\bullet k}, e_n \right\> \\
		& = \< \LSQ'_{q,t;x}(d, n \backslash r)^{\bullet k}, e_n \> \\
		& = \sum_{\pi \in \SQ'(d, n \backslash r)^{\bullet k}} q^{\dinv(\pi)} t^{\area(\pi)},
	\end{align*}
	so by taking the sum over $r$ the thesis follows.
%
%
\end{proof}

\section{Future directions}

In \cite{DAdderio-Iraci-VandenWyngaerd-GenDeltaSchroeder-2019}, the authors gave an algebraic recursion for the (conjectural) $q,t$-enumerators of the Schr\"oder case of the extended valley Delta conjecture. Hence it would be enough to find a combinatorial recursion that matches the algebraic one to prove that case as well; however, we were not able to do so.

In \cite{DAdderio-Iraci-newdinv-2019}, a combinatorial recursion for the ``ehh'' case of the shuffle theorem is given, and it would be interesting to extend such recursion to the valley-decorated version of the combinatorial objects, which in turn would lead to a proof of the Schr\"oder case of the extended valley Delta conjecture via the pushing algorithm in \cite{Iraci-VandenWyngaerd-Valley-Square-2021}; once again, our attempts were unfruitful.

Finally, as the classical recursion for the $q,t$-Catalan \cite{Garsia-Haglund-qtCatalan-2002} is an iterated version of the compositional one \cite{Haglund-Morse-Zabrocki-2012}, and both these recursions extend to the rise version of the Delta conjecture \cites{Zabrocki-4Catalan-2016, DAdderio-Mellit-Compositional-Delta-2020}, it might be the case that the same phenomenon occurrs here, and that there is a compositional refinement of the recursion for $d=0$ that might lead to a full proof of the conjecture; at the moment we do not know what should such a refinement look like.

\bibliographystyle{amsalpha}
\bibliography{bibliography}

\end{document}